\theoremstyle{plain}
\newtheorem{theorem}{Theorem}[section]
\newtheorem{proposition}[theorem]{Proposition}
\newtheorem{lemma}[theorem]{Lemma}
\newtheorem{corollary}[theorem]{Corollary}
\theoremstyle{definition}
\newtheorem{definition}[theorem]{Definition}
\newtheorem{conjecture}[theorem]{Conjecture}
\newtheorem*{acknowledgements}{Acknowledgements}
\theoremstyle{remark}
\newtheorem{remark}[theorem]{Remark}
\newtheorem{notation}[theorem]{Notation}
\newtheorem{convention}[theorem]{Convention}
\numberwithin{equation}{theorem}
\DeclareMathOperator{\Pic}{Pic}
\DeclareMathOperator{\NE}{NE}
\DeclareMathOperator{\Proj}{Proj}
\DeclareMathOperator{\pr}{pr}
\newcommand{\nequiv}{\equiv _\mathrm{num}}
\newcommand{\sE}{\mathscr{E}}
\newcommand{\sF}{\mathscr{F}}
\newcommand{\sL}{\mathscr{L}}
\newcommand{\sC}{\mathscr{C}}
\newcommand{\sS}{\mathscr{S}}
\newcommand{\sN}{\mathscr{N}}
\newcommand{\cO}{\mathcal{O}}
\newcommand{\bC}{\mathbb{C}}
\newcommand{\bP}{\mathbb{P}}
\newcommand{\bQ}{\mathbb{Q}}
\newcommand{\bZ}{\mathbb{Z}}
\newcommand{\pbP}[2]{\left(\bP^{#1}\right)^{#2}}
\newcommand{\pbQ}[2]{\left(\bQ^{#1}\right)^{#2}}
\newcommand{\pbPB}[2]{\big(\bP(#1)\big)^{#2}}
   \def\MR#1{}
\title[CP $n$-folds with $\rho > n-5$]{Fano $n$-folds with nef tangent bundle and Picard number greater than $n-5$}
\author[A. KANEMITSU]{Akihiro KANEMITSU}
\date{}
\address{Graduate School of Mathematical Sciences\\The University of Tokyo\\3-8-1 Komaba\\Meguro-ku, Tokyo 153-8914, Japan}
\email{kanemitu@ms.u-tokyo.ac.jp}
\subjclass[2010]{Primary: 14J45; Secondary: 14J40, 14M17}
\keywords{Fano manifold, nef tangent bundle, homogeneous manifold}
\begin{document}

\begin{abstract} 
We prove that Fano $n$-folds with nef tangent bundle and Picard number greater than $n-5$ are rational homogeneous manifolds.
\end{abstract}

\maketitle


\section*{Introduction}
In this paper, we continue our study of the following conjecture: 

\begin{conjecture}[Campana-Peternell conjecture \cite{CP1}]
\label{CP}
Any Fano manifold $X$ with nef tangent bundle is a rational homogeneous manifold.
\end{conjecture}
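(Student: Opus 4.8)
The plan is to run Mori theory so as to reduce the whole conjecture to the case of Picard number one, and then to attack that case with the theory of varieties of minimal rational tangents (VMRT). First I would establish the structural backbone: since $X$ is Fano with $T_X$ nef, every elementary contraction $\varphi\colon X\to Y$ is a smooth morphism, both the fibres $F$ and the target $Y$ are again Fano with nef tangent bundle, and $\rho(Y)=\rho(X)-1$. This lets me argue by induction on $\rho(X)$: if $\rho(X)>1$, then $X$ is a smooth fibre bundle over a homogeneous $Y$ with homogeneous fibre $F$, and I would then prove that such a bundle is itself homogeneous by analysing its structure group and invoking a rigidity statement for homogeneous fibrations (the automorphism groups act transitively and the bundle is associated to a principal bundle on $Y$). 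The base of the induction is $\rho(X)=1$.

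For the Picard number one case I would study a minimal rational component of curves through a general point. Nefness of $T_X$ forces these curves to be \emph{standard}: the restriction of $T_X$ to each splits as $\cO(2)\oplus\cO(1)^{\oplus p}\oplus\cO^{\oplus q}$, so that the VMRT $\mathcal{C}_x\subset\bP(T_{X,x})$ is a smooth projective variety whose second fundamental form is severely constrained by the nef hypothesis. The heart of the argument is then a recognition step: I would compute enough of the projective differential geometry of $\mathcal{C}_x$ (its dimension, degree, linear normality, and the prolongation of its symbol algebra) to identify it with the VMRT of a rational homogeneous space $G/P$ of Picard number one, and finally apply the Cartan–Fubini type extension theorem of Hwang–Mok, together with Mok's and Hong–Hwang's recognition results, to conclude $X\cong G/P$.

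The hard part will be precisely this recognition in the Picard number one case, and I want to be candid that it is exactly here that the full conjecture remains unresolved: one must rule out a priori every smooth VMRT that is not of homogeneous type. The delicate subcase is when $\mathcal{C}_x$ is positive-dimensional but fails to be linearly normal, or is not of the expected homogeneous form; controlling its deformations and its embedding then requires a classification of prolongation-rigid (and Legendrian-type) VMRTs that is not available in arbitrary dimension. I would therefore expect essentially all the genuine difficulty to concentrate in this VMRT classification, with the Mori-theoretic reduction and the homogeneity of the resulting fibrations being comparatively formal once the smoothness of contractions is in hand; accordingly a complete proof along these lines is only feasible when one has extra hypotheses (such as a bound on $\dim X-\rho(X)$) that cut down the list of VMRTs to a manageable one.
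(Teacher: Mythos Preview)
The statement you are attempting is Conjecture~\ref{CP} itself, which the paper does \emph{not} prove; the paper establishes only the partial case $\rho_X>n-5$ (Theorem~\ref{rhon-4}). There is therefore no proof in the paper to compare your proposal against, and it must be read as a strategy for the full open conjecture.

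That strategy has a genuine gap already in the inductive step, not only in the $\rho=1$ base case. You claim that once $Y$ and the fibres $F$ are known to be rational homogeneous, a ``rigidity statement for homogeneous fibrations'' forces $X$ itself to be homogeneous. No such statement is available in this generality: a smooth contraction of CP manifolds with homogeneous base and homogeneous fibres is not known to have homogeneous total space, and proving this is exactly the content of the conjecture for $\rho_X\geq 2$. The paper's effort in Sections~\ref{CPbundles} and \ref{large-rho} is devoted precisely to this step in the special situations arising under the hypothesis $\rho_X>n-5$: it proceeds via explicit Chern-class and slope computations for CP bundles (Theorem~\ref{triv}), the FT-manifold splitting theorem of \cite{MOSW,OSWW} (Proposition~\ref{iniFT}, Theorem~\ref{CPFT}), and case-by-case use of the classification in dimension $\leq 5$. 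Your structure-group/associated-bundle sketch is not a substitute: nothing in the nef hypothesis forces the transition functions of the fibration to lie in the automorphism group of a fixed homogeneous model, nor that group to extend to a transitive action on $X$.

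You are right that the $\rho=1$ case via VMRT is the deepest open part, but your reduction to it is itself open; were your inductive step valid, the conjecture would already follow for every $X$ with $\rho_X\geq 2$, which is not known.
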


In the previous work~\cite{Ka}, the author proved that Conjecture~\ref{CP} is true in dimension five, via the results of \cite{CMSB,Hw,Mi,Mok,W2}.
Hence Conjecture~\ref{CP} is known to be true in dimension at most five \cite{CP1,CP2,Hw,Mok}.
For further results or background materials about Conjecture~\ref{CP}, we refer the reader to the survey article \cite{MOSWW}.

\medskip
In \cite{CP1,CP2,W2}, Conjecture~\ref{CP} in the case of Picard number greater than one are treated by inductive approaches.
More precisely, for each $n=3$, $4$ or $5$, they proved Conjecture~\ref{CP} for $n$-folds with Picard number greater than one using the results in dimension at most $n-1$. 

Our objective is to prove Conjecture~\ref{CP} for $6$-folds with Picard number $\rho _{X}>1$ using the above results in dimension at most five.
In fact, we obtain a more general result as follows:

\begin{theorem}\label{rhon-4}
Conjecture~\ref{CP} is true in dimension $n$ with Picard number $\rho _X > n-5$. 
\end{theorem}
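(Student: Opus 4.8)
\medskip
\noindent\emph{Plan of proof.}
I would argue by induction on $n=\dim X$, taking as the base of the induction the already known fact that Conjecture~\ref{CP} holds in dimension at most $5$. Since for $n\le 5$ the hypothesis $\rho_X>n-5$ imposes no condition, this simultaneously settles the theorem for all $n\le 5$, and in particular the case $\rho_X=1$, which forces $n\le 5$. So from now on let $X$ be a Fano $n$-fold with nef tangent bundle, $n\ge 6$ and $\rho_X\ge n-4\ge 2$, and assume the theorem in all dimensions $<n$.

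Since $\rho_X\ge 2$, there is an elementary contraction $\pi\colon X\to Y$. Here I would invoke the following standard facts about a Fano manifold $Z$ with nef tangent bundle (see \cite{CP1,CP2,MOSWW} and the references therein): every elementary contraction of $Z$ is a smooth morphism; its base and all of its fibres are again Fano manifolds with nef tangent bundle; and $\rho_Z\le\dim Z$, with equality only for $Z\cong\pbP{1}{\dim Z}$. The Picard-number bound applied to $Y$ gives $\dim Y\ge\rho_Y=\rho_X-1\ge n-5$, so the fibre $F$ of $\pi$ satisfies $\dim F=n-\dim Y\le 5$; hence $F$ is rational homogeneous by the base case. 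Moreover $1\le\dim Y\le n-1$ and $\rho_Y=\rho_X-1\ge n-5>n-6\ge\dim Y-5$, so the inductive hypothesis applies to $Y$, which is therefore rational homogeneous as well. Finally, since $Y$ is rational homogeneous it is simply connected, so for the smooth fibration $\pi$ one has $\rho_X=\rho_Y+\rho_F$; as $\pi$ is elementary, $\rho_F=1$. Thus $F$ is a rational homogeneous space of Picard number one and dimension at most $5$: a projective space $\bP^k$ with $k\le 5$, a quadric $Q^k$ with $k\in\{3,4,5\}$, or the five-dimensional $G_2$-variety.

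What remains is the heart of the matter: to show that a Fano manifold $X$ with nef tangent bundle admitting an elementary smooth contraction $\pi\colon X\to Y$ onto a rational homogeneous base $Y$, with fibre $F$ as in the list above, is itself rational homogeneous. I would go through the list. If $F=\bP^k$, then since the relative Picard group is infinite cyclic and $Y$, being rational, has trivial Brauer group, $X\cong\bP_Y(\sE)$ for a vector bundle $\sE$ of rank $k+1$ on $Y$; if $F=Q^k$ one gets likewise that $X$ is a smooth relative quadric inside such a projective bundle. In both cases the question becomes: which bundles $\sE$ over a rational homogeneous base yield a nef tangent bundle on the total space? This is to be answered using the theory of (projectivized) Fano bundles over rational homogeneous spaces, the outcome being that every such $X$ is rational homogeneous. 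If $F$ is the five-dimensional $G_2$-variety, then additionally $\dim Y=n-5=\rho_Y$, so $Y\cong\pbP{1}{n-5}$ by the Picard-number characterization, and one analyses $G_2$-variety bundles over products of projective lines directly. The crucial---and most delicate---point throughout is to exclude \emph{non-homogeneous twists}: bundles or fibrations that are \'etale-locally isomorphic to the homogeneous model but globally distinct. This is where the nefness of $T_X$, together with the homogeneity of $Y$ already secured, must be brought to bear---for instance via the fact that a numerically flat vector bundle on a simply connected base is trivial, which disposes of the ``balanced'' cases. I expect the low-dimensional fibres $F=\bP^1$, $\bP^2$ or $\bP^3$, for which the endomorphism bundles of $\sE$ leave the most room, to demand the most work. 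Once every case is settled, $X$ is rational homogeneous, completing the induction.
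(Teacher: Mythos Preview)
Your inductive setup is sound: the base and fibre of an elementary contraction are rational homogeneous by induction, and the fibre lies in the short list you give. But the paragraph beginning ``What remains is the heart of the matter'' is not a proof---it is a statement of what must be proved, together with the hope that ``the theory of (projectivized) Fano bundles over rational homogeneous spaces'' will deliver the answer. That theory, in the generality you invoke, is not in the literature; supplying the missing pieces is precisely the content of this paper. Concretely, already for $n=6$ and $\rho_X=2$ one needs: (i) the classification of rank-$2$ CP bundles on $\bP^5$, $\bQ^5$, $K(G_2)$ (Proposition~\ref{P1b}, via \cite{APW,MOS}); (ii) the classification of rank-$3$ CP bundles on $\bP^4$ and on $\bQ^4$ (Theorem~\ref{triv}, proved here by a delicate slope/Chern-class computation occupying most of Section~\ref{CPbundles}, and not reducible to ``numerically flat $\Rightarrow$ trivial'', since one must first \emph{prove} $\tau=0$); and (iii) the case of smooth $\bP^3$- or $\bQ^3$-fibrations over $\bP^3$ or $\bQ^3$ (Proposition~\ref{pPb}). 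Your outline contains none of this; the sentence about numerically flat bundles covers only the case $\tau=0$ once it is already known.

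There is also a structural difference. Your plan would, at each inductive step, have to classify CP bundles over \emph{every} rational homogeneous $Y$ arising from the previous step---and as $n$ grows these $Y$ range over an ever-longer list of products. The paper bypasses this entirely: it first invokes the FT-manifold machinery of \cite{MOSW,OSWW} (Theorem~\ref{CPFT}, Proposition~\ref{iniFT}, Corollary~\ref{FTdom}) together with Theorem~\ref{2rho} to show that a CP $n$-fold with $n-\rho_X\le 4$ either contracts onto a complete flag variety---hence splits as a product and is handled by induction---or satisfies $n\ge 2\rho_X+2$, which combined with $n-\rho_X\le 4$ forces $n\le 6$. Thus the bundle classification is carried out only once, in dimension~$6$, rather than at every stage of the induction.
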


The same result in the case where $\rho _{X} > n-4$ is independently obtained by K.~Watanabe \cite{W3}.
The idea of Watanabe's proof and ours in the case where the manifold in question has large dimension are essentially the same; the idea is to use results of R.~Mu{\~n}oz, G.~Occhetta, L.E.~Sol{\'a}~Conde, K.~Watanabe and J.A.~Wi{\'s}niewski \cite{MOSW,OSWW}.

We explain the idea in more details:
A Fano manifold is called a \emph{CP manifold} if the tangent bundle is nef.
Given a CP $n$-fold $X$ ($n\geq 6$) with Picard number $\rho_{X} > n-5$, we have a non-trivial contraction $f \colon X \to Y$.
By the work of J.-P.~Demailly, T.~Peternell and M.~Schneider \cite{DPS}, the contraction $f$ is smooth, and hence, the fibers and the target $Y$ are CP manifolds (see Proposition~\ref{cont} below).
Furthermore, if the dimension of $X$ is large enough, we can show that they have a contraction onto a CP manifold $M$ whose elementary contractions are smooth $\bP^{1}$-fibrations (in \cite{MOSW}, such a manifold $M$ is called an \emph{FT manifold}, see Definition~\ref{FT}).
Then, by results of R.~Mu{\~n}oz, G.~Occhetta, L.E.~Sol{\'a}~Conde, K.~Watanabe and J.A.~Wi{\'s}niewski, $M$ is a complete flag variety and $X \simeq F \times M$, where $F$ is a fiber of the contraction.
Hence we can prove Theorem~\ref{rhon-4} by an inductive approach.

On the other hand, in lower dimensional cases, a CP manifold does not admit a contraction onto an FT manifold in general.
Hence we need to treat them by case-by-case argument.
The main difficult part is to prove that Conjecture~\ref{CP} is true for  CP $6$-folds $X$ with $\bP ^{r}$-bundle structure.
This is essentially done in Section~\ref{CPbundles}.

In Section~2, following the notion of Fano bundle, we introduce the notion of \emph{CP bundle}.
A vector bundle $\sE$  is said to be a \emph{CP bundle} if $\bP (\sE)$ is a CP manifold.
Given a Fano or CP bundle, one can define an invariant $\tau$ called \emph{slope} (see Definition~\ref{slope}).
In \cite{MOS}, severe restrictions on the pair $(Y,\sE)$ are obtained by the numerical conditions on $\tau$ (see, for instance, \cite[the proof of Proposition~4.4]{MOS}).
Also in the present paper, the numerical conditions on slopes play an important role.

\medskip
For our purpose, we rephrase the statement of Theorem~\ref{rhon-4} in the following form.

\begin{theorem}\label{CPk=4}
Let $X$ be a CP $n$-fold with Picard number $\rho _X > n-5$,
then $X$ is one of the following:

\begin{filecontents}{table.tex}
\begin{longtable}{|l|X|}
 \hline
 $\rho_{X}$ & $X$\\
 \hline
 
 $n-4$ & $\pbP{1}{n-5}  \times \big[ \bP ^5$, $\bQ ^5$ or  $K(G_2) \big]$,\\
       & $\pbP{1}{n-6}  \times \big[ \bP (\sC )$, $\bP ^2 \times \bP ^4$, $\bP ^2 \times \bQ ^4$, $\pbP{3}{2}$, $\bP ^3 \times \bQ ^3$ or $\pbQ{3}{2} \big]$,\\
       & $\pbP{1}{n-7}  \times \big[ \pbP{2}{2} \times \bP ^3$, $\pbP{2}{2} \times \bQ ^3$, $\bP ^2 \times \bP (\sS _i)$, $\bP ^2 \times \bP (T_{\bP ^3} )$, $\bP (\sN ) \times \bP ^3$ or $\bP (\sN ) \times \bQ ^3 \big]$,\\
       & $\pbP{1}{n-7}  \times \bP (T_{\bP ^2}) \times     \big[ \bP ^4$ or $\bQ ^4 \big]$,\\
       & $\pbP{1}{n-8}  \times \big[ \pbPB{\sN}{2}$, $\bP(\sN) \times \pbP{2}{2}$, $\pbP{2}{4}$ or $\bP ^2 \times F(1,2,3;4) \big]$,\\
       & $\pbP{1}{n-8}  \times \bP (T_{\bP ^2}) \times     \big[\bP (\sS _i)$, $\bP(T_{\bP ^3})$, $\bP ^2 \times \bP ^3$ or $\bP ^2 \times \bQ ^3\big]$,\\
       & $\pbP{1}{n-9}  \times \bP (T_{\bP ^2}) \times     \big[\pbP{2}{3}$, $\bP ^2 \times \bP(\sN)$ or $F(1,2,3;4) \big]$,\\
       & $\pbP{1}{n-9}  \times \pbPB{T_{\bP ^2}}{2} \times \big[  \bP ^3$ or   $\bQ ^3 \big]$,\\
       & $\pbP{1}{n-10} \times \pbPB{T_{\bP ^2}}{2} \times \big[\bP (\sN)$ or $\pbP{2}{2} \big]$,\\
       & $\pbP{1}{n-11} \times \pbPB{T_{\bP ^2}}{3} \times \bP ^2$,\\
       & $\pbP{1}{n-12} \times \pbPB{T_{\bP ^2}}{4}$\\
 \hline
 
 $n-3$ & $\pbP{1}{n-4}  \times \big[\bP ^4$ or $ \bQ ^4 \big]$,\\
       & $\pbP{1}{n-5}  \times \big[\bP (\sS _i)$, $ \bP (T_{\bP ^3} )$, $ \bP ^2 \times \bP ^3$ or $ \bP ^2 \times \bQ ^3 \big]$,\\
       & $\pbP{1}{n-6}  \times \big[\pbP{2}{3}$, $\bP ^2 \times \bP (\sN )$ or $ F(1,2,3;4)\big]$,\\
       & $\pbP{1}{n-6}  \times \bP (T_{\bP ^2}) \times \big[\bP ^3$ or $ \bQ ^3\big] $,\\
       & $\pbP{1}{n-7}  \times \bP (T_{\bP ^2}) \times \big[\bP (\sN )$ or $\pbP{2}{2} \big]$,\\
       & $\pbP{1}{n-8}  \times \pbPB{T_{\bP ^2}}{2} \times \bP ^2$,\\
       & $\pbP{1}{n-9}  \times \pbPB{T_{\bP ^2}}{3}$\\
 \hline 
 
 $n-2$ & $\pbP{1}{n-3}  \times \big[\bP ^3$ or $\bQ ^3 \big]$,\\
       & $\pbP{1}{n-4}  \times \big[\bP (\sN )$ or $ \pbP{2}{2} \big]$,\\
       & $\pbP{1}{n-5}  \times \bP (T_{\bP ^2}) \times\bP ^2 $,\\
       & $\pbP{1}{n-6}  \times \pbPB{T_{\bP ^2}}{2}$ \\
 \hline

 $n-1$ & $\pbP{1}{n-2}  \times \bP ^2$,\\
       & $\pbP{1}{n-3}  \times \bP (T_{\bP ^2})$ \\
 \hline

    $n$ & $\pbP{1}n$ \\
 \hline
\end{longtable}
\end{filecontents}

\medskip
\noindent
\LTXtable{\textwidth}{table.tex}

\medskip
\noindent
where $\sN $ is the null-correlation bundle on $\bP ^3$, $\sS _i$ $(i=1,2)$ the spinor bundles on $\bQ ^4$, $\sC $ the Cayley bundle on $\bQ ^5$ and $F(1,2,3;4)$ the variety of all complete flags in $\bC^4$.
\end{theorem}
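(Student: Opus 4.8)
Each manifold in the table is rational homogeneous --- the bundles $\sN$, $\sS_i$, $\sC$ and the flag varieties listed are homogeneous, and homogeneity is preserved by products --- and conversely has globally generated, hence nef, tangent bundle; so Theorem~\ref{CPk=4} refines Theorem~\ref{rhon-4}, and the plan is to prove it by induction on $n$. The base case $n\le 5$ is the classification of CP manifolds of dimension at most five in \cite{Ka} (together with the references quoted there), whose restriction to $\rho_X>n-5$ is exactly the $n\le 5$ part of the table. So assume $n\ge 6$, hence $\rho_X\ge 2$, and fix an elementary contraction $f\colon X\to Y$. By Proposition~\ref{cont}, $f$ is a smooth fiber-type contraction whose base $Y$ and general fiber $F$ are CP manifolds, with $\rho_Y=\rho_X-1$ and $\dim Y<n$; as $\dim F\ge 1$ we get $\rho_Y\ge\rho_X-1>n-6\ge\dim Y-5$, so $Y$ is covered by the induction and is in particular a product of the homogeneous ``building blocks'' of the table.

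Case (A): $X$ has an elementary contraction that is a smooth $\bP^1$-fibration. Let $g\colon X\to M$ be the contraction of the face of $\cNE(X)$ spanned by the \emph{remaining} extremal rays; since contractions of CP manifolds are smooth, the $\bP^1$-fibration rays descend to $\bP^1$-fibrations of $M$, so $M$ is an FT manifold (Definition~\ref{FT}). By \cite{MOSW,OSWW}, $M$ is then a complete flag variety and $X\simeq F'\times M$ for $F'$ a fiber of $g$. Since $\dim M\ge\rho_M$ for a flag variety, $F'$ is a CP manifold with $\rho_{F'}=\rho_X-\rho_M>(n-5)-\rho_M\ge\dim F'-5$ and $\dim F'<n$, so the induction hypothesis applies to $F'$; it then only remains to check that the product of a table entry with a complete flag variety is again a table entry, which is the finite combinatorial bookkeeping the table encodes --- the inequality $n-\rho_X\le 4$ bounds the total ``defect'' $\sum_i(\dim B_i-\rho_{B_i})$ of the non-$\bP^1$ factors $B_i$, so only finitely many configurations occur.

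Case (B): $X$ has no elementary $\bP^1$-fibration. Then every elementary contraction of $X$ has fibers of dimension $\ge 2$; in particular $f\colon X\to Y$ is a smooth fibration with such fibers, and by the structure of CP contractions it is the projectivization of a rank-$(r+1)$ CP bundle $\sE$ on the already-classified $Y$ (or, in a few cases, a quadric or $K(G_2)$ bundle), with $r=\dim F\ge 2$. If $Y$ is a nontrivial product, a splitting analysis of $\sE$ (in the spirit of \cite{MOS}) peels off a factor and one applies induction; the essential remaining case is a $\bP^r$-bundle ($r\ge 2$) over an \emph{irreducible} homogeneous manifold $Y$ --- necessarily $\bP^m$, $\bQ^m$ or $K(G_2)$, since $\rho_Y=1$ --- with $\sE$ genuinely not a twisted pullback. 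Then $\rho_X=\rho_Y+1=2$, so $\rho_X>n-5$ forces $n\le 6$, and the remaining task (carried out in Section~\ref{CPbundles}) is to classify the CP bundles $\sE$ of rank $7-\dim Y$ on such $Y$ of dimension at most $4$: one invokes the numerical restrictions on the slope $\tau(\sE)$ (Definition~\ref{slope}), analyzes the relative Mori cone and the restriction $\sE|_{\ell}$ to lines $\ell\subset Y$, and shows that the only possibilities are those in the $n=6$ rows of the table.

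I expect the main obstacle to be precisely this last step. When $f$ is a $\bP^r$-bundle with $r\ge 2$ the Picard number of the total space does not drop and the base is a fixed homogeneous variety, so there is no lower-dimensional classification of the total space to fall back on; ruling out the ``unwanted'' CP bundles on $\bP^m$, $\bQ^m$ and $K(G_2)$ must be done directly, combining slope inequalities, positivity estimates for $\sE$ restricted to rational curves, and the classification of the relative contractions of $\bP(\sE)$. It is here that the full strength of $\rho_X>n-5$ is used: it is what bounds $\tau(\sE)$ and so cuts the a priori infinite family of candidate pairs $(Y,\sE)$ down to the finite list in the theorem.
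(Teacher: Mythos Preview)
Your Case~(A) contains a genuine error. Having a single elementary $\bP^1$-fibration does \emph{not} guarantee that the contraction of the ``remaining'' rays lands on an FT manifold. Take $X=\bP(\sS_i)$ over $\bQ^4$ (equivalently the flag variety $F(1,2;4)$): it has $\rho_X=2$, one elementary contraction is a $\bP^1$-fibration onto $\bQ^4$, the other is a $\bP^2$-fibration onto $\bP^3$. Contracting the non-$\bP^1$ ray gives $M=\bP^3$, which is not FT; and $X$ admits no non-trivial contraction onto an FT manifold at all. So your dichotomy ``has a $\bP^1$-fibration / has none'' is not the right one, and the sentence ``the $\bP^1$-fibration rays descend to $\bP^1$-fibrations of $M$'' is simply false: extremal rays of $M$ need not be $\bP^1$-fibrations just because their preimages in $\cNE(X)$ were.

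The paper's substitute for your Case~(A) is Theorem~\ref{2rho}: one proves, by induction along a chain of elementary contractions, that a CP $n$-fold with $2\rho_X>n$ always admits a contraction onto an FT manifold, and moreover classifies the borderline cases $n=2\rho_X$ and $n=2\rho_X+1$ explicitly. Combined with Corollary~\ref{FTdom} and the hypothesis $n-\rho_X\le 4$, this forces $n\le 6$ and $\rho_X\le 2$ in the residual case, which is what actually reduces the problem to Section~\ref{CPbundles}. Your Case~(B) reaches the same endpoint but the reduction ``if $Y$ is a nontrivial product, a splitting analysis peels off a factor'' is not a proof; the paper's Theorem~\ref{2rho} is exactly the missing argument that makes this reduction rigorous. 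Note also that Section~\ref{CPbundles} does not treat rank-$4$ bundles over $3$-folds: the case $\dim Y=\dim Z=3$ in the proof of Theorem~\ref{k=4} is handled instead by Proposition~\ref{pPb} and an \'etale-cover argument, not by a bundle classification.
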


For the definition of the null-correlation bundle $\sN $,
the spinor bundles $\sS _i$ and the Cayley bundle $\sC $, we refer the reader to \cite{OSS,O1,O2}.
Note that the null-correlation bundle $\sN$ is a vector bundle of rank $2$ over $\bP^{3}$ and that the projectivization $\bP (\sN)$ of the null-correlation bundle is isomorphic to the projectivization of the spinor bundle on $\bQ ^3$.

\medskip
As a byproduct of the proof of Theorem~\ref{rhon-4}, we obtained the following:

\begin{theorem}[=Theorem~\ref{2rho}]
Let $X$ be a CP $n$-fold which does not admit a contraction onto an FT manifold.
Then  $n \geq 2\rho_{X}$.
Furthermore, the following hold:

\begin{enumerate}
\item
If the equality holds, then $X \simeq \pbP{2}{\rho_X}$.
\item
If $ n = 2\rho_X+1$, then
$X \simeq \pbP{2}{\rho_X -1} \times \bP ^3$,
$\pbP{2}{\rho_X -1} \times \bQ ^3$,
$\pbP{2}{\rho_X -2} \times \bP (\sS _i)$ or
$\pbP{2}{\rho_X -2} \times \bP (T_{\bP ^3} )$.
\end{enumerate}
\end{theorem}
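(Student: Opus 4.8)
The plan is to prove, by induction on $\rho_X$, the stronger statement consisting of the inequality $n\geq 2\rho_X$ together with (1) and (2). The case $\rho_X=1$ is nothing but the classification of CP manifolds of dimension at most three: if $X$ admits no contraction onto an FT manifold then $X\not\simeq\bP^1$, so $n\geq 2$, with $X\simeq\bP^2$ when $n=2$ and $X\in\{\bP^3,\bQ^3\}$ when $n=3$. Assume now $\rho_X\geq 2$. Then $X$ is not itself an FT manifold (otherwise the identity would be a contraction of $X$ onto an FT manifold), so by Definition~\ref{FT} some elementary contraction $f\colon X\to Y$ is not a smooth $\bP^1$-fibration. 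By Proposition~\ref{cont}, $f$ is smooth, hence of fibre type, and its fibre $F$ is a CP manifold; since $f$ is not a $\bP^1$-fibration, $\dim F\geq 2$. Again by Proposition~\ref{cont}, $Y$ is a CP manifold with $\rho_Y=\rho_X-1$, and $Y$ admits no contraction onto an FT manifold, for composing such a contraction with $f$ would produce one of $X$. The induction hypothesis gives $\dim Y\geq 2\rho_Y$, whence $n=\dim F+\dim Y\geq 2+2(\rho_X-1)=2\rho_X$.

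For (1) and (2), I would run the same argument while recording when equality occurs. If $n=2\rho_X$ then necessarily $\dim F=2$ and $\dim Y=2\rho_Y$, so $Y\simeq\pbP{2}{\rho_X-1}$ by the induction hypothesis; if $n=2\rho_X+1$ then $\dim F\in\{2,3\}$, with $\dim Y=2\rho_Y$ or $2\rho_Y+1$ accordingly, and $Y$ is one of the manifolds allowed by the induction hypothesis, namely a product of copies of $\bP^2$ with at most one further factor among $\bP^3$, $\bQ^3$, $\bP(\sS_i)$ and $\bP(T_{\bP^3})$. In particular $Y$ has vanishing odd rational cohomology, so the smooth fibration $f$ has trivial monodromy and $\rho_F=\rho_X-\rho_Y=1$; as CP manifolds of dimension at most $3$ with Picard number one are exactly $\bP^2$, $\bP^3$ and $\bQ^3$, the fibre $F$ is thereby determined. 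It remains to recover $X$ from the smooth $\bP^r$-fibration ($r\in\{2,3\}$), or $\bQ^3$-fibration, $f\colon X\to Y$ over this explicitly known base $Y$. For small $\rho_X$ one can instead quote Theorem~\ref{CPk=4}, which also supplies base cases beyond $\rho_X=1$.

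The step I expect to be the main obstacle is this last one: classifying the CP manifolds $X$ equipped with a smooth fibration $f\colon X\to Y$ with fibre $\bP^r$ ($r\in\{2,3\}$) or $\bQ^3$ and with $Y$ as above. This is exactly where the slope of a CP (or Fano) bundle (Definition~\ref{slope}) and the restrictions on the pair $(Y,\sE)$ of \cite{MOS} become decisive, and it is the substance of Section~\ref{CPbundles}: evaluating the slope of the vector bundle associated with $f$ on the extremal rational curves coming from the factors of $Y$ forces that bundle to be trivial along every $\bP^2$-factor and, along a possible remaining factor, to be one of the finitely many CP bundles over $\bP^3$, $\bQ^3$, $\bP(\sS_i)$ or $\bP(T_{\bP^3})$ — the nontrivial options occurring only over a $\bP^3$-factor and reproducing $\bP(\sS_i)$ and $\bP(T_{\bP^3})$. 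Unwinding this shows that $X\simeq\pbP{2}{\rho_X}$ when $n=2\rho_X$ and that $X$ is one of the four products of (2) when $n=2\rho_X+1$. Conversely, each manifold in these lists is immediately seen to be a CP manifold of the asserted dimension and Picard number admitting no contraction onto an FT manifold, which closes the induction.
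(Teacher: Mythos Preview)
Your induction on $\rho_X$ for the inequality $n\geq 2\rho_X$ is correct and cleaner than the paper's induction on $n$; the key observation that the target $Y$ of any contraction of $X$ again admits no contraction onto an FT manifold is exactly right. Part~(1) also goes through essentially as in the paper: the restriction of the $\bP^2$-bundle to each $\bP^2$-factor of $Y\simeq\pbP{2}{\rho_X-1}$ is a CP $4$-fold with a $\bP^2$-bundle structure over $\bP^2$, hence $\bP^2\times\bP^2$ by Theorem~\ref{CP5}, and Proposition~\ref{trivb} \ref{trivb5}$\Rightarrow$\ref{trivb1} finishes. (Note, however, that citing Theorem~\ref{CPk=4} for base cases is circular, since its proof relies on the present theorem; use Theorem~\ref{CP5} instead.)

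Part~(2) has a genuine gap. First, when $\dim F=3$ and $F\simeq\bQ^3$, $f$ is not a projective bundle, so neither Section~\ref{CPbundles} nor the slope invariant applies; the paper sidesteps this by exhibiting (via the classification of CP $5$-folds with $\rho=2$) a \emph{second} elementary contraction with $2$-dimensional fibre. Second, and more seriously, even in the $\bP^2$-bundle case with $Y\simeq\pbP{2}{\rho_X-2}\times\bP^3$, knowing the restriction to each factor does \emph{not} determine $X$: the restriction to the $\bP^3$-factor may well be $\bP(T_{\bP^3})$ or $\bP(\sS_i)$, and once a restriction is nontrivial Proposition~\ref{trivb} gives no conclusion. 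Your ``unwinding'' is not justified here. The paper handles this case by producing another elementary $\bP^2$-bundle $g\colon X\to V$ with $V$ on the inductive list, and then invoking Proposition~\ref{pPb} (not Section~\ref{CPbundles}) to get $-K_g$ nef, whence $X\simeq\bP^2\times V$ by Proposition~\ref{trivb} \ref{trivb2}$\Rightarrow$\ref{trivb1}. The slope computations of Section~\ref{CPbundles} play no role in this theorem; they enter only later, for the $6$-folds of Theorem~\ref{k=4}.
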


\begin{convention}
We will work in the category of complex projective varieties unless otherwise stated.
Given a vector bundle $\sE$ over a variety, we will denote by $\bP (\sE)$ the associated projective space bundle of one-dimensional quotients, that is,  $\bP(\sE) \coloneqq \Proj \big(S(\sE)\big)$.

A \emph{smooth $\bP ^r$-fibration} is a smooth morphism whose  fibers are isomorphic to $\bP^r$.
On the other hand, a \emph{$\bP^r$-bundle} is the projectivization of a vector bundle. 

Unless otherwise stated, $X$ is a CP manifold,
$n$ is the dimension of $X$ and
$\rho _X$ is the Picard number of $X$.
\end{convention}

\begin{acknowledgements}
The author wishes to express his gratitude to his supervisor Professor Yoichi Miyaoka for his encouragement, comments and suggestions.
The author is also grateful to Professor Kiwamu Watanabe for his helpful comments and suggestions, and for sending his preprint \cite{W3}.
The author also wishes to thank Professor Hiromichi Takagi and Doctor Fumiaki Suzuki for their careful reading of the manuscript and for helpful comments.
The author is a JSPS Research Fellow and he is supported by the Grant-in-Aid for JSPS fellows (JSPS KAKENHI Grant Number 15J07608).
This work was supported by the Program for Leading Graduate Schools, MEXT, Japan.
\end{acknowledgements}

\section{Preliminaries}\label{pre}
In this section, we collect some results which we use later.

\subsection{CP manifolds}

\begin{definition}[{\cite[Definition~1.4]{MOSWW}}]
A Fano manifold $X$ is said to be a \textit{CP manifold} if the tangent bundle of $X$ is nef.
\end{definition}

CP manifolds with dimension at most five are classified:

\begin{theorem}[{\cite{CP1,CP2,CMSB,Hw,Ka,Mi,Mok,W2}}]  \label{CP5}
Let $X$ be a CP manifold of dimension at most five. Then $X$ is a rational homogeneous manifold.
\end{theorem}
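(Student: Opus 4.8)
The plan is to argue by induction on $n = \dim X$. For $n \le 2$ the assertion is classical: $\bP^1$ is the only Fano curve, and among del Pezzo surfaces only $\bP^2$ and $\bP^1 \times \bP^1$ have nef tangent bundle — any blow-up of a point carries a $(-1)$-curve $C$ on which $T_X$ acquires a negative quotient — and both of these are homogeneous. So assume $3 \le n \le 5$ and that the statement holds in all smaller dimensions.

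Suppose first that $\rho_X > 1$, and choose an elementary Mori contraction $f \colon X \to Y$. By the theorem of Demailly--Peternell--Schneider (recalled in Proposition~\ref{cont} below) $f$ is a smooth morphism, and the base $Y$ together with every fiber $F$ is again a CP manifold; since $\dim Y < n$ and $\dim F < n$, the inductive hypothesis shows that $Y$ and $F$ are rational homogeneous. It then remains to upgrade this to homogeneity of $X$. For this one studies the relative family of minimal rational curves of $f$: because $T_X$ is nef, the restriction of $T_X$ to such a curve has a rigid splitting type, which (in the low dimensions at issue) severely constrains the fibration — if $f$ is a $\bP$-bundle the associated vector bundle is forced to be of a very special kind, if $f$ is a quadric fibration it is similarly pinned down — and one concludes that $X$ is rational homogeneous. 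The complete list of possibilities is worked out by case analysis in \cite{CP1,CP2,W2}.

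Now suppose $\rho_X = 1$; this is the core of the theorem. Since $X$ is Fano it carries a minimal dominating family of rational curves; let $\ell$ be a general member and let $\mathcal{V}_x \subset \bP(T_x X)$ be the variety of minimal rational tangents (VMRT) at a general point $x$. Nefness of $T_X$ forces a splitting $T_X|_\ell \cong \cO(2) \oplus \cO(1)^{\oplus p} \oplus \cO^{\oplus(n-1-p)}$ with $p = -K_X \cdot \ell - 2 = \dim \mathcal{V}_x$, and this is precisely the rigidity needed to bring the VMRT-theory of Hwang and Mok to bear. If $-K_X \cdot \ell = n+1$ then $\mathcal{V}_x = \bP(T_x X)$ and $X \simeq \bP^n$ by the Cho--Miyaoka--Shepherd-Barron characterization of projective space (\cite{CMSB}; see also \cite{Mi}); if $-K_X \cdot \ell = n$ then $\mathcal{V}_x$ is a smooth quadric and $X \simeq \bQ^n$ by Mok's characterization of hyperquadrics \cite{Mok}. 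When $-K_X \cdot \ell$ is smaller one shows, using the recognition theorems, that $\mathcal{V}_x$ is smooth, irreducible and projectively homogeneous, identifies the corresponding homogeneous model, and reconstructs $X$ by a Cartan--Fubini type argument: for $n \le 4$ nothing new survives beyond $\bP^n$ and $\bQ^n$, while for $n = 5$ the only additional possibility is the $5$-dimensional $G_2$-homogeneous contact manifold $K(G_2)$, obtained via the analysis of the Cayley bundle in \cite{Hw,W2,Ka}.

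The main obstacle is exactly this last step: excluding hypothetical non-homogeneous Fano $5$-folds of Picard number one with nef tangent bundle. With $\rho_X = 1$ there is no contraction along which to induct, so one must directly control the VMRT at a general point — its smoothness, irreducibility and projective homogeneity — and then invoke a recognition theorem to recover $X$. Establishing enough rigidity of the VMRT for these recognition theorems to apply is the technically hardest part, and it is what makes dimension $5$ (the content of \cite{Ka}, building on \cite{CMSB,Hw,Mi,Mok,W2}) substantially harder than dimensions $\le 4$.
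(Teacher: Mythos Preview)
The paper does not supply its own proof of this theorem: it is stated with citations to \cite{CP1,CP2,CMSB,Hw,Ka,Mi,Mok,W2} and immediately used as a black box, so there is no ``paper's proof'' against which to compare your attempt. What you have written is a reasonable high-level survey of how the cited literature proceeds --- induction on $n$ with the $\rho_X>1$ case handled by studying smooth elementary contractions (as in \cite{CP1,CP2,W2}) and the $\rho_X=1$ case handled via minimal rational curves and VMRT recognition theorems (as in \cite{CMSB,Mi,Mok,Hw,Ka}) --- and in that sense it matches the spirit of the references.

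That said, as a \emph{proof} your text remains only a sketch. Two places where the hand-waving hides real work: in the $\rho_X>1$ step, ``the associated vector bundle is forced to be of a very special kind'' is exactly the substance of the case analyses in \cite{CP1,CP2,W2} and cannot be dispatched in a sentence; and in the $\rho_X=1$ step for $n=5$, the assertion that the VMRT is smooth, irreducible and projectively homogeneous, and that Cartan--Fubini then pins down $K(G_2)$, is precisely the hard content of \cite{Hw} and \cite{Ka} rather than a consequence one can simply invoke. Also, a minor historical inaccuracy: for $n\le 4$ the original arguments in \cite{CP1,CP2,Mok} did not proceed uniformly via VMRT recognition in the way you describe; some used the classification of Fano manifolds of small dimension directly. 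None of this is a ``gap'' in the sense of a wrong idea, but you should be aware that what you have written is an outline of a long argument spread over several papers, not a self-contained proof --- which is exactly why the present paper cites the result rather than reproving it.
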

In this case, the explicit form of $X$ as in the table of Theorem~\ref{CPk=4} is also known.

\subsection{Contractions of CP manifolds}
Contractions of CP manifolds are similar to those of rational homogeneous manifolds: 

\begin{proposition}
[{\cite[Proposition~4]{MOSW}}]
\label{cont}
Let $X$ be a CP manifold and $\pi  \colon X \to Y$ a contraction.
Then the following properties hold:

\begin{enumerate}
\item The morphism $\pi$ and $Y$ are smooth.
In particular, the fibers and $Y$ are CP manifolds.\label{cont1}

\item $\rho_{X} \leq \dim X$ \label{cont2}

\item The Picard number of a $\pi$-fiber $F$ is $\rho _X - \rho _Y$ and $j_* \big(\NE (F)\big) = \NE (X) \cap j_* \big( N _1 (F) \big)$, where $j\colon F \to X$ is the inclusion. \label{cont3}

\item $\NE (X)$ is simplicial. \label{cont4}
\end{enumerate}

\end{proposition}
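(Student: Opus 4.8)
The plan is to isolate one deep input---the structure theorem for contractions of CP manifolds of Demailly--Peternell--Schneider \cite{DPS}---and to derive parts (2)--(4) from it by the Mori theory of Fano manifolds together with an induction on $\rho_X$. The input I would quote from \cite{DPS} is: \emph{if $\pi\colon X\to Y$ is an elementary contraction of a CP manifold, then $\pi$ is an equidimensional smooth fibration onto a smooth base, both $Y$ and each fibre $F$ are CP manifolds, and $\rho_F=1$.} Of this, the nefness of $T_Y$ is formal---$\pi^{*}T_Y$ is a quotient of $T_X$ via the relative tangent sequence, hence nef, and nefness descends along the surjection $\pi$---and $-K_F=-K_X|_F$ is ample by adjunction since $N_{F/X}$ is trivial; the substantive assertions (smoothness, $T_F$ nef, $Y$ Fano, and $\rho_F=1$) are what \cite{DPS} provides.

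Granting this, I would prove (4) by induction on $\rho_X$, the case $\rho_X\le 1$ being clear. If $\rho_X\ge 2$, choose an extremal ray $R$ with contraction $\pi_R\colon X\to Y$; then $\rho_Y=\rho_X-1$, $Y$ is a CP manifold, and $\NE(Y)$ is simplicial by induction. Since $\pi_R$ and every other elementary contraction of $X$ is of fibre type (no divisorial or small behaviour, by the elementary case above), the standard combinatorics of the Mori cone of such a Fano manifold shows that $\NE(X)$ has exactly $1+\rho_Y=\rho_X$ extremal rays; being pointed ($-K_X$ ample) and full-dimensional in $N_1(X)\cong\bR^{\rho_X}$, it is simplicial. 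Part (2) then follows by the same induction: for $\rho_X\ge 1$ pick an elementary contraction $\pi\colon X\to Y$; it is a non-trivial smooth morphism, so $\dim F\ge 1$, and $\dim X=\dim F+\dim Y\ge 1+\rho_Y=\rho_X$ using $\dim Y\ge\rho_Y$ by the inductive hypothesis.

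For (1) in general: a contraction $\pi\colon X\to Y$ contracts a face $G\prec\NE(X)$, and by (4) this is a simplicial face spanned by $d=\dim G$ of the extremal rays, so $\pi$ factors as a tower of elementary contractions $X=X_0\to X_1\to\dots\to X_d=Y$, each $X_i$ a CP manifold; smoothness of $\pi$, and $Y$ and the fibres being CP manifolds, follow. For the Picard-number part of (3), one adds up along this tower, each step contributing one to the Picard number by the case $\rho_F=1$, giving $\rho_F=\rho_X-\rho_Y$; alternatively, since the fibres are Fano one has $R^{q}\pi_{*}\cO_X=0$ for $q>0$ and, $Y$ being Fano hence simply connected, the Leray sequence for $\cO_X^{*}$ yields an injection $\Pic(X)/\pi^{*}\Pic(Y)\hookrightarrow\Pic(F)$, so $\rho_X-\rho_Y\le\rho_F$ (the reverse inequality is the delicate one, again supplied by \cite{DPS}). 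For the cone identity, $j_{*}\NE(F)\subseteq\NE(X)\cap j_{*}N_1(F)$ is clear; conversely, since $\pi$ is smooth all fibres are numerically equivalent, so the contracted face equals $\NE(X)\cap\ker\big(\pi_{*}\colon N_1(X)\to N_1(Y)\big)=j_{*}\NE(F)$, and as $j_{*}N_1(F)\subseteq\ker(\pi_{*})$ we get $\NE(X)\cap j_{*}N_1(F)\subseteq\NE(X)\cap\ker(\pi_{*})=j_{*}\NE(F)$, hence equality.

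The main obstacle is thus entirely in part (1), and within it in precisely the two assertions that are special to nef tangent bundles and fail for general Fano manifolds: smoothness of $\pi$ with $T_F,T_Y$ nef, and the equality $\rho_F=\rho_X-\rho_Y$ (for a general Fano manifold two extremal rays of a fibre can become proportional in the total space, so this equality genuinely uses $T_X$ nef). An honest treatment of these must run through the arguments of \cite{DPS}---bend-and-break applied to the minimal rational curves generating the extremal rays, the fact that a nef tangent bundle forces these curves to move freely and to sweep out the fibres, and the resulting Picard-number bound on a fibre---and I would simply invoke \cite{DPS} for them; parts (2)--(4) and the cone statement in (3) then follow formally as above.
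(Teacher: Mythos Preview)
Your approach matches the paper's in spirit: both invoke \cite{DPS} (the paper also cites \cite{SW}) for smoothness, note that $Y$ Fano comes from \cite{KMM3} and nefness of $T_Y,T_F$ from the relative tangent sequence (the paper cites \cite{CP1} for this), and derive \ref{cont2} by induction.  The difference is that for \ref{cont3} and \ref{cont4} the paper simply refers the reader to \cite[Proposition~4]{MOSW}, whereas you attempt to sketch those arguments.

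Your sketch for \ref{cont4} has a genuine gap.  The sentence ``the standard combinatorics of the Mori cone of such a Fano manifold shows that $\NE(X)$ has exactly $1+\rho_Y=\rho_X$ extremal rays'' is not a formal consequence of every elementary contraction being of fibre type together with $\NE(Y)$ simplicial.  One needs to show, for instance, that for any two extremal rays $R_1,R_2$ the span $R_1+R_2$ is an extremal \emph{face}; this is exactly the step where the CP hypothesis re-enters (smoothness lets one restrict one contraction to the fibres of the other and apply induction), and it is the actual content of the argument in \cite{MOSW}, not a bookkeeping exercise.  Relatedly, you attribute the equality $\rho_F=1$ for elementary contractions to \cite{DPS}, but \cite{DPS} supplies smoothness; the Picard-number statement is part of what \cite{MOSW} proves (your Leray argument, as you note, only gives the easy inequality).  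Once \ref{cont4} and the elementary case of \ref{cont3} are in hand, your derivations of \ref{cont1} for general contractions, of \ref{cont2}, and of the cone identity in \ref{cont3} are fine.
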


\begin{proof}
\ref{cont1} The morphism $\pi$ and $Y$ are smooth by \cite[Theorem~5.2]{DPS} or \cite[Theorem~4.4]{SW}.
It follows from \cite[Corollary~2.9]{KMM3} that $Y$ is a Fano manifold.
Fibers are also Fano by adjunction.
Furthermore, by \cite[Proposition~2.11]{CP1} the tangent bundles of fibers and $Y$ are nef.
Hence they are CP manifolds.

\ref{cont2} This is a consequence of \ref{cont1}.

For proofs of \ref{cont3} and \ref{cont4}, we refer the reader to \cite[Proposition~4]{MOSW}.
\end{proof}

\subsection{Characterization of complete flag manifolds}
Recently, in \cite{OSWW}, a characterization of complete flag manifolds was obtained by G.~Occhetta, L.E.~Sol{\'a}~Conde, K.~Watanabe and J.A.~Wi{\'s}niewski.
We briefly recall the results of \cite{OSWW}.
For details, we refer the reader to \cite{MOSW}, \cite{OSWW}.

\begin{definition}[{\cite[Definition~1]{MOSW}}] \label{FT}
A CP manifold $M$ is said to be an {\it FT manifold} if every elementary contraction of $M$ is a smooth $\bP ^1$-fibration.
\end{definition}

\begin{theorem}[{\cite[Theorem~1.2]{OSWW}}] \label{CPFT}
Let $M$ be a Fano manifold.
Assume  that every elementary contraction of $M$ is a smooth $\bP ^1$-fibration.
Then $M$ is a complete  flag variety, that is, $M \simeq G/B$ where $G$ is a semisimple group and $B$ is a Borel subgroup.
\end{theorem}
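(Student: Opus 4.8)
The plan is to argue by induction on the Picard number $\rho = \rho_M$, extracting from the elementary contractions of $M$ a generalized Cartan matrix $C$ of finite type and then identifying $M$ with $G/B$, where $G = G(C)$ is the simply connected semisimple group attached to $C$ and $B\subset G$ a Borel subgroup. First I would establish that $M$ is an FT manifold in the sense of Definition~\ref{FT}: only the Fano condition and the condition on elementary contractions are given a priori, so the nef-ness of $T_M$ is itself part of what must be shown, and this is where one uses that \emph{every} elementary contraction, not merely some, is a smooth $\bP^1$-fibration. Granting it, Proposition~\ref{cont} applies to all contractions of $M$: $\cNE(M)$ is simplicial, there are exactly $\rho$ elementary contractions $\pi_i\colon M\to M_i$ ($1\le i\le\rho$) with extremal rays $R_i$, and the contraction of any face of $\cNE(M)$ is smooth with fibers that are again FT manifolds whose elementary contractions are the appropriate restrictions of the $\pi_i$ (part~(3) of Proposition~\ref{cont}). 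Writing $\Gamma_i$ for the class of a line in a fiber of $\pi_i$, the $\Gamma_i$ form a basis of $N_1(M)$, one has $-K_M\cdot\Gamma_i = 2$, and for every fiber $F$ of $\pi_i$ the sequence $0\to T_{\pi_i}\to T_M\to\pi_i^*T_{M_i}\to 0$ restricts on $F$ to a split sequence with middle term $\mathcal{O}(2)\oplus\mathcal{O}^{n-1}$, the $\mathcal{O}(2)$ being $T_{\pi_i}|_F = T_F$. Moreover $-K_M = 2\sum_i L_i$, where $\{L_i\}\subset N^1(M)$ is dual to $\{\Gamma_i\}$, the analogue of the sum of the fundamental weights. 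The case $\rho = 1$ is the base case, where $M\cong\bP^1\cong\mathrm{SL}_2/B$.

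Next I would define, for $i\ne j$, the integer $c_{ij} := (T_{\pi_i})\cdot\Gamma_j$ and set $c_{ii} := 2$. Restricting $T_{\pi_i}$ to a fiber $F$ of $\pi_j$ and using the splitting of $T_M|_F$ together with the fact that $F$ is not contracted by $\pi_i$ (so $T_F\cap T_{\pi_i}|_F = 0$), one obtains an injection $T_{\pi_i}|_F\hookrightarrow T_M|_F/T_F\cong\mathcal{O}^{n-1}$, whence $c_{ij}\le 0$. The first substantial step is then the rank-two analysis: for each pair $\{i,j\}$ one restricts to the fiber $F_{ij}$ of the contraction of the face $\langle R_i,R_j\rangle$ --- an FT manifold of Picard number $2$ --- and classifies such manifolds (its two smooth $\bP^1$-fibration structures, the finite morphism onto the product of their bases, and the slope estimates of \cite{MOS}-type applied to its two Fano-bundle structures force $F_{ij}$ to be one of $\pbP{1}{2}$, $\bP(T_{\bP^2})$, $\bP(\sN)$ or $\bP(\sC)$; in dimension $\le 5$ this is contained in Theorem~\ref{CP5}). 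One then reads off that $(c_{ij},c_{ji})$ equals $(0,0)$, $(-1,-1)$, $(-1,-2)$ or $(-1,-3)$ up to order, so in particular $c_{ij}c_{ji}\in\{0,1,2,3\}$ and $C = (c_{ij})$ is a symmetrizable generalized Cartan matrix. A boundedness argument on $M$ --- each positive root of $C$ being realized by a rational curve of $(-K_M)$-degree $2$, and these falling into finitely many numerical classes --- shows the Weyl group of $C$ is finite, hence $C$ is of finite type. Put $G = G(C)$ with Borel $B$, so that $G/B$ carries the same diagram of $\rho$ elementary contractions.

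It remains to promote this coincidence of combinatorial data into an actual isomorphism $M\cong G/B$. If $C$ is decomposable, then (reading the splitting off $\cNE(M)$ and the contractions, via Proposition~\ref{cont}) $M$ is a product and one concludes factor by factor; so I would reduce to $C$, hence $G$, indecomposable. The most direct route is then to construct a transitive $G$-action on $M$: each $\pi_i$, being a smooth $\bP^1$-fibration over the Fano manifold $M_i$, is Zariski-locally trivial, so $\pi_i = \bP(\sE_i)$, and one endows $\sE_i$ with a relative $\mathrm{SL}_2$-structure; the commutation and braid relations among these copies of $\mathrm{SL}_2$, dictated by the integers $c_{ij}$ via the rank-two cases, assemble them into an action of $G$ on $M$ which is transitive (being transitive along each $\pi_i$-fiber, and the $\pi_i$-fibers connect $M$). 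The stabilizer of a point is then a parabolic of dimension $\dim G - \dim M$, which a count of positive roots identifies with $B$, giving $M\cong G/B$; alternatively one matches the full tower of contractions of $M$ with that of $G/B$, using the inductive hypothesis to identify the flag-variety fibers over the proper-face contractions with the complete flag varieties of the connected proper sub-diagrams of $C$. I expect this last step --- passing from the diagrammatic match to the global homogeneous structure --- to be the main obstacle: the rank-two classification and the Cartan-matrix bookkeeping are comparatively formal, but reconstructing the interlocking of all $\rho$ of the $\bP^1$-fibrations at once is delicate, and it is exactly here that the theorem of \cite{OSWW} does its real work.
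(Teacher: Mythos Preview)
The paper does not prove this statement: Theorem~\ref{CPFT} is quoted from \cite[Theorem~1.2]{OSWW} and used as a black box, with no proof or sketch supplied. So there is no ``paper's own proof'' to compare your proposal against.

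That said, your outline is broadly the strategy of \cite{OSWW}, with one caveat worth flagging. You propose to first establish that $T_M$ is nef so as to invoke Proposition~\ref{cont}; but nefness of $T_M$ is not an intermediate step in \cite{OSWW} --- it follows only \emph{a posteriori} from the homogeneity of $M$, and attempting to prove it upfront is likely circular or at least as hard as the theorem itself. What \cite{OSWW} do instead is derive the needed structural facts (simpliciality of $\cNE(M)$, smoothness of all contractions, the fiberwise Picard-number statement) directly from the hypothesis that every elementary contraction is a smooth $\bP^1$-fibration, without passing through nefness of the tangent bundle. The remainder of your sketch --- defining $c_{ij}$ via $T_{\pi_i}\cdot\Gamma_j$, the rank-two classification yielding $c_{ij}c_{ji}\in\{0,1,2,3\}$, the finite-type conclusion via a curve-boundedness argument, and the reconstruction of the $G$-action from the interlocking $\bP^1$-fibrations --- tracks the architecture of \cite{OSWW}, and you correctly identify the final reconstruction step as where the real difficulty lies.
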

Hence, in particular, FT manifolds are complete flag varieties.
Furthermore, in \cite{MOSW}, the following property of FT manifolds is proved:
\begin{proposition}[{\cite[Proposition~5]{MOSW}}] \label{iniFT}
Let $X$ be a CP manifold.
If there exists a contraction $\pi\colon  X \to M$ onto an FT manifold $M$, then $X \simeq F \times M$ and $\pi$ is the second projection, where $F$ is a fiber of $\pi$.
\end{proposition}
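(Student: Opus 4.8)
The plan is to use the $\bP^1$-fibration structure of $M$ to produce on $X$ a complete set of $\bP^1$-bundle elementary contractions lying ``over'' those of $M$, then to contract the face they span and split $\pi$ off as the projection of a product. First I would record the basic structure: by Theorem~\ref{CPFT} we may assume $M\simeq G/B$; by Proposition~\ref{cont} the contraction $\pi$ is smooth, each fiber $F$ is a CP manifold, $\NE(X)$ is simplicial, and $\rho_F=\rho_X-\rho_M$. For $j=1,\dots,k$ ($k=\rho_M$) let $p_j\colon M\to M_j$ be the elementary contractions of $M$, each a smooth $\bP^1$-fibration, and let $\ell_j$ be a fiber of $p_j$; the classes $[\ell_j]$ generate the $k$ extremal rays of $\NE(M)$, and $-K_M\cdot\ell_j=2$. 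Since $\pi_*\colon N_1(X)\to N_1(M)$ is surjective and carries $\NE(X)$ onto $\NE(M)$ with kernel face equal to the face $\sigma$ contracted by $\pi$, for each $j$ there is a unique extremal ray $R_j$ of $\NE(X)$ with $\pi_*R_j=\bR_{\ge0}[\ell_j]$. Set $\tau=\langle R_1,\dots,R_k\rangle$; then $\tau$ is a face of $\NE(X)$, $\NE(X)=\sigma+\tau$, and $\sigma\cap\tau=\{0\}$.

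The main point --- and what I expect to be the crux --- is that the elementary contraction $\phi_j\colon X\to X'_j$ of $R_j$ is a smooth $\bP^1$-fibration for every $j$. By Proposition~\ref{cont} it is smooth; it is of fiber type, since a smooth birational contraction of a smooth projective variety is an isomorphism, which would contradict the drop in Picard number; and its fibers are CP manifolds of Picard number one. To see that these fibers are curves, I would introduce the composite $\pi_j\coloneqq p_j\circ\pi\colon X\to M_j$, which is a contraction factoring through $\phi_j$ and whose fibers $Z\coloneqq\pi^{-1}(\ell_j)$ are CP manifolds with $\NE(Z)=\sigma+R_j$. On such a $Z$ the contractions $\pi|_Z\colon Z\to\ell_j$ and $\phi_j|_Z$ contract the complementary faces $\sigma$ and $R_j$, so $(\pi|_Z,\phi_j|_Z)$ contracts no curve, hence is finite; comparing dimensions forces the fibers of $\phi_j|_Z$ to be at most one-dimensional. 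Since every fiber of $\phi_j$ is contracted by $\pi_j$ and therefore lies in some such $Z$, all fibers of $\phi_j$ are one-dimensional CP manifolds, i.e.\ copies of $\bP^1$. The same argument shows that the restriction of each $\phi_j$ to a fiber of the contraction $\phi_\tau\colon X\to X_\tau$ of $\tau$ is again a smooth $\bP^1$-fibration.

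Next I would study a fiber $G$ of $\phi_\tau$. By Proposition~\ref{cont} it is a CP manifold with $\NE(G)=\tau$, and by the previous step each of its elementary contractions is a smooth $\bP^1$-fibration; thus $G$ is an FT manifold, and by Theorem~\ref{CPFT} a rational homogeneous space --- in particular simply connected, as is $M$. I then claim $\pi|_G\colon G\to M$ is an isomorphism. It is finite, because $\pi_*$ maps $\NE(G)=\tau$ isomorphically onto $\NE(M)$. For a fiber $\ell'$ of $\phi_j|_G$ one has $-K_X\cdot\ell'=2$ (it is a fiber of the $\bP^1$-fibration $\phi_j$), hence $-K_G\cdot\ell'=2$ since the normal bundle of $G$ in $X$ is trivial; as $-K_M\cdot\ell_j=2$ and $\pi$ maps $\ell'$ isomorphically onto a fiber of $p_j$, the ramification divisor of $\pi|_G$ has zero intersection with every $\ell'$. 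Since the classes $[\ell']$ span $N_1(G)$, this effective divisor is numerically trivial, hence zero, so $\pi|_G$ is \'etale; being a cover of the simply connected $M$, it is an isomorphism.

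Finally, $(\phi_\tau,\pi)\colon X\to X_\tau\times M$ is a morphism over $X_\tau$ whose restriction to each fiber of $\phi_\tau$ is, by the previous step, an isomorphism onto the corresponding fiber of the first projection; therefore it is proper, quasi-finite and bijective, hence an isomorphism onto the smooth variety $X_\tau\times M$, under which $\pi$ becomes the second projection. Restricting over a point of $M$ identifies $X_\tau$ with a fiber $F$ of $\pi$, and we obtain $X\simeq F\times M$ with $\pi$ the projection. Apart from the combinatorics of the simplicial cone $\NE(X)$, the two steps that require care are the dimension estimate showing that the rays $R_j$ give $\bP^1$-fibrations and the ramification computation identifying $\pi|_G$ with an isomorphism; the rest is formal.
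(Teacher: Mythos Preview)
The paper does not give its own proof of this proposition; it is simply quoted from \cite[Proposition~5]{MOSW}. So there is nothing to compare your argument against in this paper, and the question reduces to whether your outline is correct.

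Your overall architecture is the right one and most of the steps are sound: the simplicial cone combinatorics, the identification of the rays $R_j$, the argument that each $\phi_j$ is a smooth $\bP^1$-fibration via the auxiliary fiber $Z=\pi_j^{-1}(\text{pt})$, and the conclusion that the fibers $G$ of $\phi_\tau$ are FT manifolds are all fine.

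The gap is in the step where you analyse $\pi|_G\colon G\to M$. You assert that ``$\pi$ maps $\ell'$ isomorphically onto a fiber of $p_j$'', but you have only shown that $\pi(\ell')$ is contained in a fiber $\ell_j$ of $p_j$ and that $\pi|_{\ell'}\colon\bP^1\to\bP^1$ is finite of some degree $d\ge 1$; nothing so far forces $d=1$. With $\pi_*[\ell']=d[\ell_j]$ the ramification computation becomes $R\cdot\ell'=2(d-1)$, which is nonnegative but not obviously zero. Relatedly, you implicitly use $\dim G=\dim M$ when you invoke Riemann--Hurwitz, but this equality has not been established either: you only know $\pi|_G$ is finite, hence $\dim G\le\dim M$, and symmetrically $\phi_\tau|_F$ finite gives $\dim F\le\dim X_\tau$; these two inequalities are consistent with strict inequality. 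One way to close both gaps simultaneously is to argue by induction on $\dim X$: for each $j$, the fiber $Z=\pi_j^{-1}(\text{pt})$ is a CP manifold of strictly smaller dimension (when $\rho_M\ge 2$) admitting the contraction $\pi|_Z\colon Z\to\ell_j\simeq\bP^1$ onto an FT manifold, so by induction $Z\simeq F\times\ell_j$ and the fibers $\ell'$ of $\phi_j$ become the slices $\{\text{pt}\}\times\ell_j$, giving $d=1$; the base case $\rho_M=1$, i.e.\ $M=\bP^1$, then has to be handled directly. As written, however, this crucial degree-one claim is a genuine missing step.
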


As a corollary, we have:
\begin{corollary}\label{FTdom}
Let $n>0$ and $k \geq 0$ be integers.
Assume that Conjecture~\ref{CP} is true for any CP manifold $Y$ with $\dim Y < n$ and $\dim Y - \rho _{Y} \leq k$.

Then Conjecture~\ref{CP} is true for CP $n$-fold $X$ with $n-\rho _X \leq k$ which admits a contraction $f \colon X \to M$ onto an FT manifold $M$.
\end{corollary}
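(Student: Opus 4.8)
The plan is to peel the FT factor off $X$ by means of the structure theorem for contractions onto FT manifolds, and then to apply the assumed case of Conjecture~\ref{CP} to the fibre. Since all of the substantive input is already available in Proposition~\ref{cont}, Proposition~\ref{iniFT} and Theorem~\ref{CPFT}, the argument is essentially a bookkeeping of dimensions and Picard numbers.

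First I would apply Proposition~\ref{iniFT} to the contraction $f\colon X \to M$: as $X$ is a CP manifold and $M$ is an FT manifold, we obtain an isomorphism $X \simeq F \times M$ under which $f$ becomes the second projection, $F$ being a fibre of $f$. By Definition~\ref{FT} together with Theorem~\ref{CPFT}, $M$ is a complete flag variety, hence rational homogeneous; and a product of rational homogeneous manifolds $G_1/P_1$ and $G_2/P_2$ is $(G_1 \times G_2)/(P_1 \times P_2)$, again rational homogeneous. Therefore it suffices to prove that $F$ is rational homogeneous.

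For this I would verify that $F$ satisfies the two conditions under which Conjecture~\ref{CP} is assumed to hold. By Proposition~\ref{cont}, a fibre of a contraction of a CP manifold is again a CP manifold, so $F$ is a CP manifold; and since $\dim M \geq 1$ we have $\dim F = n - \dim M < n$. Also by Proposition~\ref{cont} the Picard number of the fibre is $\rho_F = \rho_X - \rho_M$, hence
\[
\dim F - \rho_F = (n - \dim M) - (\rho_X - \rho_M) = (n - \rho_X) - (\dim M - \rho_M) \leq n - \rho_X \leq k,
\]
where the first inequality uses $\rho_M \leq \dim M$ (Proposition~\ref{cont} applied to the CP manifold $M$, or simply that $M$ is a flag variety). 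Thus the hypothesis of the corollary applies to $Y = F$, so Conjecture~\ref{CP} holds for $F$; that is, $F$ is rational homogeneous, and therefore so is $X \simeq F \times M$.

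The only point that genuinely needs attention is the numerical chain $\dim F - \rho_F \leq k$: one must use $\rho_M \leq \dim M$ for the flag-variety factor, without which the inequality $\dim F - \rho_F \leq n - \rho_X$ would fail. Beyond that there is no real obstacle — the whole content of the corollary is carried by the three imported results, and what remains is the elementary bookkeeping above.
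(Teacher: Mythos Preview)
Your proof is correct and follows essentially the same route as the paper: apply Proposition~\ref{iniFT} to split $X \simeq F \times M$, use Theorem~\ref{CPFT} to see that $M$ is a complete flag variety, and then invoke the inductive hypothesis on $F$ after checking $\dim F - \rho_F \leq k$ via $\rho_M \leq \dim M$. The only extra detail you spell out is $\dim M \geq 1$ to ensure $\dim F < n$, which the paper leaves implicit.
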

\begin{proof}
By Proposition~\ref{iniFT}, we have $X \simeq F \times M$, where $F$ is the fiber of $f$.
By Theorem~\ref{CPFT}, $M$ is a complete flag variety.

By Proposition~\ref{cont}, $F$ is a CP manifold with dimension $n- \dim M$ and Picard number $\rho_X - \rho _M$.
Then we have $\dim F -\rho_F \leq n- \rho _X  \leq k$ since $\rho_M \leq \dim M$.
Hence $F$ is a rational homogeneous manifold by our assumption, and the assertion follows.
\end{proof}

\section{CP Bundles}\label{CPbundles}
A vector bundle $\sE$ on a manifold $Y$ is called a \emph{Fano bundle} if the projectivization $\bP (\sE)$ is a Fano manifold, and they are classified in several cases.
For more details and results, we refer the reader to \cite{MOS} and the reference therein.

\begin{definition}
A vector bundle $\sE$ on a manifold $Y$ is said to be a \emph{CP bundle} if the projectivization $\bP (\sE)$ is a CP manifold.
\end{definition}

\begin{remark}\label{rem_bundle}
\hfill
\begin{enumerate}
 \item If $\sE$ is a CP bundle over $Y$, then $Y$ is a CP manifold by Proposition~\ref{cont}. 
 \item If $Y$ is a rational manifold or a curve, then the Brauer group of $Y$ is trivial and hence all smooth $\bP^{r-1}$-fibration over $Y$ is a $\mathbb{\bP}^{r-1}$-bundle (see, e.g.\ \cite[Proposition~2.5]{W2}). \label{rem_bundle2}

\end{enumerate}
\end{remark}

\subsection{Triviality of CP bundles}
We prove several characterizations of triviality of CP bundles.
\begin{proposition}\label{trivb}
Let $\sE$ be a CP bundle of rank $r$ over a manifold $Y$ and $\pi \colon  \bP (\sE) \to Y$ the natural projection.
Then the following are equivalent:
\begin{enumerate}
 \item $\bP(\sE)$ is trivial.\label{trivb1}
 \item The relative anticanonical divisor $-K_{\pi}$ is nef.\label{trivb2}
 \item For every rational curve $f \colon \bP ^1 \to Y$, the base change of $\pi$ by $f$ is trivial.\label{trivb3}
 \item For every rational curve $f \colon \bP ^1 \to Y$ whose image generates an extremal ray, the base change of $\pi$ by $f$ is trivial.\label{trivb4}
 \item For every elementary contraction $f \colon Y \to Z$ and every fiber $F$ of $f$, the base change of $\pi$ over $F$ is trivial.\label{trivb5}
 \item $\sE$ splits into a direct sum of line bundles.\label{trivb6}
 \item $\sE \simeq \sL ^{\oplus r}$ for a line bundle $\sL$.\label{trivb7}
\end{enumerate}
\end{proposition}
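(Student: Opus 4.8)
The plan is to establish the routine implications $(7)\Rightarrow(6)$, $(1)\Leftrightarrow(7)$, $(1)\Rightarrow(2)$, $(1)\Rightarrow(3)\Rightarrow(4)$, $(1)\Rightarrow(5)\Rightarrow(4)$ together with the three substantive ones $(6)\Rightarrow(7)$, $(4)\Rightarrow(2)$ and $(2)\Rightarrow(1)$; these close every cycle among the seven conditions. The routine part is quick: if $\sE\simeq\sL^{\oplus r}$ then twisting by $\sL^{-1}$ gives $\bP(\sE)\simeq\bP(\cO_Y^{\oplus r})\simeq Y\times\bP^{r-1}$ over $Y$, so $(7)\Rightarrow(1)$ and trivially $(7)\Rightarrow(6)$; conversely a trivialisation $\bP(\sE)\simeq Y\times\bP^{r-1}$ over $Y$ pulls $\cO_{\bP^{r-1}}(1)$ back to a line bundle of the form $\xi\otimes\pi^*\sN$ on $\bP(\sE)$, whence $\sE\otimes\sN\simeq\pi_*(\xi\otimes\pi^*\sN)\simeq\cO_Y^{\oplus r}$ and $(1)\Rightarrow(7)$; for the trivial bundle $-K_\pi=\pr_2^*(-K_{\bP^{r-1}})$ is the pullback of an ample divisor, hence nef, and every base change is again trivial, so $(1)\Rightarrow(2),(3),(5)$; finally $(3)\Rightarrow(4)$ is the special case of base changing along extremal rational curves, and for $(5)\Rightarrow(4)$ a rational curve generating an extremal ray $R$ lies in a fiber of the contraction of $R$, over which the base change is already trivial.

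For $(6)\Rightarrow(7)$, write $\sE=\bigoplus_{i=1}^r\sL_i$; the quotient $\sE\twoheadrightarrow\sL_i$ defines a section $\sigma_i\colon Y\hookrightarrow\bP(\sE)$ with normal bundle $\bigoplus_{j\ne i}\sL_i\otimes\sL_j^{-1}$. Since $\bP(\sE)$ is a CP manifold, $T_{\bP(\sE)}|_{\sigma_i(Y)}$ is nef, hence so is its quotient $N_{\sigma_i(Y)/\bP(\sE)}$ and therefore each $\sL_i\otimes\sL_j^{-1}$; by symmetry its inverse is nef too, so $\sL_i\otimes\sL_j^{-1}$ is numerically trivial, hence trivial because $Y$ is Fano (so simply connected with $\Pic^0(Y)=0$). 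Thus all the $\sL_i$ agree and $\sE\simeq\sL_1^{\oplus r}$. For $(4)\Rightarrow(2)$, $\cNE(\bP(\sE))$ is simplicial (Proposition~\ref{cont}); one extremal ray is $\bR_{\ge0}[\ell]$, $\ell$ a line in a $\pi$-fiber contracted by $\pi$, the rest being $\rho_Y$ rays $\Gamma_i$, and it suffices to check $-K_\pi\ge0$ on each. On $[\ell]$ it is $r>0$. For $\Gamma_i$, contract the $2$-face $\langle[\ell],\Gamma_i\rangle$ (possible, $\bP(\sE)$ being Fano); killing $[\ell]$, this contraction factors through $\pi$ as $g_i\circ\pi$, and comparing Picard numbers shows $g_i\colon Y\to U_i$ is elementary and kills the nonzero class $\pi_*\Gamma_i$, so $\pi_*\Gamma_i$ spans an extremal ray of $Y$. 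If $C$ generates $\Gamma_i$ then $D:=\pi(C)$ is a rational curve generating that ray of $Y$, so by $(4)$ we have $\bP(\sE)\times_Y\widetilde D\simeq\bP^1\times\bP^{r-1}$ (with $\widetilde D\to D$ the normalisation), on which the relative anticanonical divisor is the nef divisor $\pr_2^*(-K_{\bP^{r-1}})$; since $C$ lifts into this product, $-K_\pi\cdot C\ge0$, and $-K_\pi$ is nef.

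The heart of the proof is $(2)\Rightarrow(1)$. Restricting to rational curves first yields $(2)\Rightarrow(3)$: for $f\colon\bP^1\to Y$, $-K_\pi$ pulls back to the relative anticanonical divisor of $\bP(f^*\sE)\to\bP^1$, which is therefore nef; writing $f^*\sE=\bigoplus\cO(a_i)$ (Grothendieck), the section given by the quotient onto the smallest $\cO(a_i)$ has relative-anticanonical degree $r\min_ia_i-\sum_ia_i\le0$, so nefness forces the $a_i$ equal and $\bP(f^*\sE)$ trivial. Now $-K_\pi$ is nef and $-K_\pi-K_{\bP(\sE)}$ is ample (a nef divisor plus the ample $-K_{\bP(\sE)}$), hence nef and big, so by the base-point-free theorem $-K_\pi$ is semiample; let $\phi\colon\bP(\sE)\to Z$ be the associated contraction, with $m(-K_\pi)\sim\phi^*H$, $H$ ample. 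By Proposition~\ref{cont}, $\phi$ is smooth and $Z$ is a smooth (CP) manifold. Since $-K_\pi$ is $\pi$-ample, $\phi$ contracts no $\pi$-fiber, and as $\pi$-fibers have Picard number one $\phi$ restricts to a finite morphism $\bP^{r-1}\to Z$ on each; over a rational curve $D\subset Y$ the product structure above shows $\phi$ factors through $\pr_2$, so $\phi(\pi^{-1}(y))$ is independent of $y\in D$, and by rational chain connectedness of $Y$ it is one fixed $(r-1)$-dimensional subvariety of $Z$, which is therefore all of $Z$. So $\dim Z=r-1$, each fiber $G=\phi^{-1}(z)$ has dimension $\dim Y$ and meets every $\pi$-fiber finitely, hence $\pi|_G\colon G\to Y$ is finite; and $m(-K_\pi)|_G\sim0$ forces $K_G-(\pi|_G)^*K_Y$ to be torsion, hence trivial ($G$ being Fano), so Riemann--Hurwitz makes $\pi|_G$ unramified, hence \'etale, hence — $Y$ simply connected, $G$ connected — an isomorphism. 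Therefore $(\pi,\phi)\colon\bP(\sE)\to Y\times Z$ is bijective, hence an isomorphism, and comparing $\pi$-fibers gives $Z\simeq\bP^{r-1}$; thus $\bP(\sE)\simeq Y\times\bP^{r-1}$ over $Y$, which is $(1)$.

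The main obstacle is exactly this implication $(2)\Rightarrow(1)$: one must pass from the numerical hypothesis "$-K_\pi$ nef" to an honest global product, and the subtle point is controlling the semiample fibration $\phi$ — above all proving $\dim Z=r-1$, which rests on the fact that over rational curves one already has a product together with the rational connectedness of the Fano manifold $Y$. The auxiliary fact in $(4)\Rightarrow(2)$, that an extremal ray of $\bP(\sE)$ not killed by $\pi$ maps onto an extremal ray of $Y$, also needs a small argument but is routine given that $\bP(\sE)$ is Fano with simplicial Mori cone.
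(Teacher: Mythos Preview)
Your proof is correct, but it takes a substantially different route from the paper's. The paper disposes of the hard implications quickly by quoting \cite[Proposition~2.4]{MOS}: it uses that result (together with rational connectedness of $Y$ and of the fibers $F$) to get both $(3)\Rightarrow(1)$ and $(4)\Rightarrow(5)$ in one stroke, and then proves $(5)\Rightarrow(2)$ by the same extremal-ray bookkeeping you use for $(4)\Rightarrow(2)$. For $(6)\Rightarrow(1)$ the paper reduces, via the already-established equivalence of $(1)$--$(5)$, to the case $\rho_Y=1$, where $\sE\simeq\bigoplus\cO_Y(a_i)$ and the normal bundle of the obvious sub-projective-bundle forces all $a_i$ equal. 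By contrast, you give a direct, self-contained argument for $(2)\Rightarrow(1)$---running the semiample fibration of $-K_\pi$, pinning $\dim Z=r-1$ via rational chain connectedness, and finishing with Riemann--Hurwitz on the $\phi$-fibers---which in effect reproves the relevant content of \cite[Proposition~2.4]{MOS} in this setting; and your $(6)\Rightarrow(7)$ via the normal bundles $\bigoplus_{j\ne i}\sL_i\otimes\sL_j^{-1}$ of the tautological sections works for arbitrary $\rho_Y$ without any reduction. The trade-off is clear: the paper's proof is much shorter but leans on an external reference, while yours is longer but stands on its own and makes the geometry of the implication $(2)\Rightarrow(1)$ transparent.
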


\begin{proof} 
Set $X \coloneqq \bP(\sE)$.

The implications \ref{trivb1} $\Rightarrow$ \ref{trivb2} $\Rightarrow$ \ref{trivb3} $\Rightarrow$ \ref{trivb4} and \ref{trivb1} $\Rightarrow$ \ref{trivb7} $\Rightarrow$ \ref{trivb6} are obvious.

\ref{trivb4} $\Rightarrow$ \ref{trivb5} and \ref{trivb3} $\Rightarrow$ \ref{trivb1}.
By Proposition~\ref{cont}, the fiber $F$ and $Y$ are Fano manifolds.
Hence they are rationally connected \cite[Theorem~0.1]{KMM3}.
The assertion follows from \cite[Proposition~2.4]{MOS}.

\ref{trivb5} $\Rightarrow$ \ref{trivb2}.
It is enough to see that $-K_{\pi}$ is nef on every extremal ray of $\NE(X)$.
Let $R$ be the ray corresponding to $\pi$.
Obviously $-K_{\pi}$ is nef on $R$.
On the other hand, by Proposition~\ref{cont}, there is a one-to-one correspondence between the set of rays in $\NE(X)$ which are not $R$ and the set of rays of $\NE(Y)$.
Hence the assertion follows.

We already see that the first five of the conditions are equivalent.
Hence we may assume that $Y$ has Picard number one.

\ref{trivb6} $\Rightarrow$ \ref{trivb1}.
Since $\Pic (Y) \simeq \bZ$, we can write $\sE \simeq \bigoplus \cO_Y (a_i)$ with integers $a_1 \geq \dotsb \geq a_{r}$, where $\cO _Y(1)$ is the generator of $\Pic (Y)$.
By twisting with a line bundle, we may assume that $a_1 = 0$.
Then, there is a splitting exact sequence:
\[
0 \longrightarrow \cO_Y \longrightarrow \sE \simeq \bigoplus \cO_Y (a_i) \longrightarrow \sF \simeq \bigoplus^{}_{i>1} \cO_Y (a_i) \longrightarrow 0.
\]
This gives a projective subbundle $\bP(\sF) \subset \bP(\sE)$ with normal bundle $\cO(1)$ of $\bP(\sF)$.
Since $\bP(\sE)$ is a CP manifold, the normal bundle is nef, and hence, $a_i = 0$ for all $i$.
\end{proof}

\subsection{The slope of CP bundle}
In the rest of this section, we assume that the Picard number of $Y$ is one.
First, we fix some notation.
\begin{notation}
Let $\sE$ be a CP bundle of rank $r$ over a manifold $Y$ with Picard number one, and set $X \coloneqq \bP (\sE)$ with the natural projection $\pi \colon X \to Y $.
We will denote by $\xi$ the class of the tautological line bundle $\cO_{\bP (\sE)} (1)$ and by $H$ the pullback of the ample generator of $\Pic Y$.

Then $-K_\pi = r\xi - \pi ^* c_1(\sE )$ and $-K_X = r\xi + \pi ^*\bigl(-K_Y - c_1 (\sE )\bigr)$.
\end{notation}

In \cite{MOS}, the following invariant is introduced:

\begin{definition}[{\cite[Definition~2.1]{MOS}}]\label{slope}
Let the notation be as above.
The \emph{slope} of the pair $\left(Y,\sE \right)$ is the real number $\tau$ such that $-K_\pi + \tau H$ is nef but not ample.
\end{definition}

\begin{proposition}[{\cite[Remark~2.9]{MOS}}] \label{tau}
The following hold:
\begin{enumerate}
\item $0\leq \tau < F_Y$, where $F_Y$ is the Fano index of $Y$. \label{tau1}
\item $\tau \in \bQ $. \label{tau2}
\item $-K_\pi+ \tau H$ is semiample. \label{tau3}
\end{enumerate}
\end{proposition}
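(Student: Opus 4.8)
The plan is to pin down $\tau$ by analysing the one-parameter family of $\bQ$-divisors $-K_\pi + tH$ on $X \coloneqq \bP(\sE)$, and then to read off the three assertions in the order (1), (3), (2). The preliminary fact I would use is this: since $\xi$ restricts to the hyperplane class on every fibre of $\pi$, the divisor $-K_\pi = r\xi - \pi^*c_1(\sE)$ is $\pi$-ample; hence $-K_\pi + tH$ is ample for $t \gg 0$ (a $\pi$-ample divisor plus the pull-back of a sufficiently positive divisor on $Y$), while for $t \ll 0$ it is not nef (after rescaling it tends to $-H$, which lies outside the closed nef cone). By convexity of $\mathrm{Nef}(X)$ the set $\{t \in \bR : -K_\pi + tH \text{ is nef}\}$ is a closed half-line $[\tau_0, \infty)$, and writing $-K_\pi + tH$ with $t > \tau_0$ as a positive combination of $-K_\pi + \tau_0 H$ and an ample $-K_\pi + t_1 H$ shows that $-K_\pi + tH$ is ample for every $t > \tau_0$. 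Thus $\tau = \tau_0$ is well defined and is the unique real number with $-K_\pi + \tau H$ nef but not ample.

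For (1), since $\Pic Y \simeq \bZ$ we have $\pi^*(-K_Y) = F_Y H$, so $-K_X = -K_\pi + F_Y H$; as $X$ is Fano, $-K_X$ is ample, and the preliminary fact gives $\tau < F_Y$. For the lower bound I would distinguish two cases. If $-K_\pi$ is not nef, then $\tau > 0$. If $-K_\pi$ is nef, then by Proposition~\ref{trivb} (implication \ref{trivb2} $\Rightarrow$ \ref{trivb1}) the bundle $\bP(\sE)$ is trivial, that is, $X \simeq Y \times \bP^{r-1}$ with $\pi$ the first projection, so $-K_\pi$ is the pull-back of $\cO_{\bP^{r-1}}(r)$ along the second projection, which is nef but not ample; hence $\tau = 0$. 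In either case $\tau \geq 0$.

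For (3), put $D \coloneqq -K_\pi + \tau H$, a nef divisor. Then $D - K_X$ is the sum of the nef divisor $D$ and the ample divisor $-K_X$, hence ample, and in particular nef and big; the base-point-free theorem therefore applies to $D$ and shows that $D$ is semiample. For (2), by (3) the divisor $D$ induces a contraction $\phi \colon X \to Z$ with $D = \phi^*A$ for some ample $A$ on $Z$. Since $D$ is not ample, $\phi$ is not finite, so it contracts some irreducible curve $C$; then $D \cdot C = 0$, that is, $-K_\pi \cdot C = -\tau (H \cdot C)$. Moreover $H \cdot C \neq 0$, since otherwise $C$ would lie in a fibre of $\pi$, on which $-K_\pi$ is ample, forcing $D \cdot C = -K_\pi \cdot C > 0$. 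Hence $\tau = (K_\pi \cdot C)/(H \cdot C)$ is a ratio of integers, so $\tau \in \bQ$.

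The only step that goes beyond formal use of the cone theorem and the base-point-free theorem is the inequality $\tau \geq 0$: there one must exclude the possibility that $-K_\pi$ is ample, and for this I would invoke the triviality criterion of Proposition~\ref{trivb}; everything else is routine.
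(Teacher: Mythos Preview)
Your argument for (1) is correct, though the route to $\tau \geq 0$ differs from the paper's: the paper invokes \cite[Corollary~2.8]{KMM3} (to the effect that the relative anticanonical divisor of a nontrivial smooth fibration is never ample), whereas you deduce non-ampleness of $-K_\pi$ from Proposition~\ref{trivb}. Both are valid, and yours stays self-contained within the paper.

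There is, however, a circularity in your treatment of (2) and (3). You prove (3) by applying the base-point-free theorem to $D = -K_\pi + \tau H$, but the Kawamata--Shokurov theorem requires $D$ to be (a $\bQ$-multiple of) a Cartier divisor, so that some positive integer multiple is integral and one can speak of its linear system being base-point-free. At the moment you invoke it you only know $\tau \in \bR$, so $D$ is a priori merely an $\bR$-divisor and the theorem does not apply in its standard form. You then use (3) to produce the contracted curve and deduce (2), so the two steps lean on each other.

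The fix is to reverse the order, which is essentially what the paper does: establish (2) first via the rationality theorem, or more concretely observe that since $X$ is Fano the cone $\cNE(X)$ is rational polyhedral, hence so is the nef cone; as $-K_\pi + \tau H$ lies on its boundary and $-K_\pi$, $H$ are integral classes, $\tau$ must be rational. (Your own argument for (2) also works once you obtain the curve $C$ directly from the cone theorem rather than from semiampleness: every extremal ray of $\cNE(X)$ is spanned by a rational curve, and $D$ vanishes on the ray not contracted by $\pi$.) With $\tau \in \bQ$ in hand, the base-point-free theorem yields (3) exactly as you wrote.
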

\begin{proof}
\ref{tau1} Since $X$ is Fano, we have $\tau < F_Y$.
The inequality $0\leq \tau$ follows from \cite[Corollary~2.8]{KMM3}.

The second and third parts follow from the rationality theorem and the Kawamata-Shokurov base point free theorem (\cite{KMM} or \cite{KM}).
\end{proof}

As a corollary of Proposition~\ref{trivb}, we have the following:
\begin{corollary}
\label{tau0}
If $\tau = 0$, then $X \simeq \bP ^{r-1} \times Y$.
\end{corollary}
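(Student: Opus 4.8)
The plan is to deduce this immediately from Proposition~\ref{trivb}, which has already collected seven equivalent characterizations of triviality. By Definition~\ref{slope}, the divisor $-K_\pi + \tau H$ is nef; so the hypothesis $\tau = 0$ says precisely that $-K_\pi$ itself is nef. That is exactly condition~\ref{trivb2} of Proposition~\ref{trivb}. Invoking the equivalences there — concretely the implications \ref{trivb2} $\Rightarrow$ \ref{trivb1} and \ref{trivb2} $\Rightarrow$ \ref{trivb7} — I would conclude that $\bP(\sE)$ is a trivial projective bundle over $Y$ and that $\sE \simeq \sL^{\oplus r}$ for some line bundle $\sL$. Hence $X = \bP(\sE) \simeq \bP(\cO_Y^{\oplus r}) \simeq \bP^{r-1}\times Y$, which is the assertion.

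The only genuine point to spell out is the translation of the hypothesis, and this is elementary: "$\tau = 0$" means by definition that $-K_\pi$ is nef but not ample, and nefness is all that condition~\ref{trivb2} requires (the non-ampleness is automatic whenever $\sE$ is not already trivial, and plays no role in the conclusion). Since the substance is entirely contained in Proposition~\ref{trivb}, I do not expect any real obstacle; if one wished to avoid quoting that proposition wholesale one could instead replay the chain \ref{trivb5} $\Rightarrow$ \ref{trivb2} $\Rightarrow$ \ref{trivb1} using $\rho_Y = 1$, but there is nothing to be gained by doing so.
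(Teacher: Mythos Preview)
Your proposal is correct and matches the paper's approach exactly: the paper presents Corollary~\ref{tau0} without a separate proof, introducing it simply as ``a corollary of Proposition~\ref{trivb}''. Your explicit unpacking --- that $\tau=0$ gives $-K_\pi$ nef, which is condition~\ref{trivb2} and hence forces triviality --- is precisely the intended one-line deduction.
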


\subsection{CP $6$-folds which admit projective space bundle structures}
We restrict our attention  to pairs $(Y,\sE )$ with $\dim \bP (\sE) =6$ and the Picard number of $Y$ is one.

Using the classification of Fano bundles of rank two, we have the following:

\begin{proposition}\label{P1b}
Let $\sE$ be a CP bundle of rank two over $Y \simeq \bP ^5$, $\bQ ^5$ or $K(G_2)$.
Then $\bP (\sE )$ is a rational homogeneous manifold.
In particular, $X \simeq \bP ^1 \times \bP ^5$, $\bP ^1 \times \bQ ^5$, $\bP ^1 \times K(G_2)$ or $\bP (\sC)$, where $\sC$ is the Cayley bundle on $\bQ ^5$. 
\end{proposition}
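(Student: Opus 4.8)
The plan is to combine the classification of rank-two Fano bundles over the three five-dimensional Fano manifolds $\bP^5$, $\bQ^5$, $K(G_2)$ (each of Picard number one) with the triviality criterion of Proposition~\ref{trivb}. Since $X=\bP(\sE)$ is a CP manifold it is in particular Fano, so $\sE$ is a rank-two Fano bundle over $Y$. First I would invoke the known classification (see \cite{MOS} and the references therein, together with Ottaviani's work \cite{O1,O2} on the five-dimensional quadric): up to twisting $\sE$ by a line bundle — which does not change $\bP(\sE)$ — the bundle $\sE$ is either a direct sum of line bundles, or $Y\simeq\bQ^5$ and $\sE$ is the Cayley bundle $\sC$, or $Y\simeq K(G_2)$ and $\sE$ is the non-split rank-two Fano bundle on $K(G_2)$ whose projectivization is the $G_2$-flag variety. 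In all cases the list of candidates is explicit and finite.

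Next I would dispose of the split case. A rank-two bundle that is a direct sum of line bundles and is a CP bundle must be of the form $\sL^{\oplus 2}$ by Proposition~\ref{trivb} (conditions \ref{trivb1}, \ref{trivb6} and \ref{trivb7} are equivalent), whence $X\simeq\bP^1\times Y$; since $Y$ is one of $\bP^5$, $\bQ^5$, $K(G_2)$, which are rational homogeneous, so is $X$. Alternatively, and without appealing to Proposition~\ref{trivb}, one can argue directly: if $\sE\simeq\cO_Y\oplus\cO_Y(e)$ with $e\geq 1$, the surjection $\sE\twoheadrightarrow\cO_Y$ determines a section $\sigma\colon Y\to\bP(\sE)$ of $\pi$ with normal bundle $N_{\sigma(Y)/\bP(\sE)}\simeq\cO_Y(-e)$, which is not nef; but a submanifold of a CP manifold has nef normal bundle, being a quotient of the restriction of the nef tangent bundle — a contradiction, so $e=0$.

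It remains to treat the two genuinely non-split possibilities, and for these the projectivization is homogeneous a priori. For $Y\simeq\bQ^5$ and $\sE\simeq\sC$ one has $\bP(\sC)\simeq G_2/B$, the variety of complete flags for the exceptional group $G_2$; this is rational homogeneous, hence automatically a CP manifold, and it accounts for the entry $\bP(\sC)$ in the statement. For $Y\simeq K(G_2)$ the remaining bundle also satisfies $\bP(\sE)\simeq G_2/B\simeq\bP(\sC)$, because $G_2/B$ is a $\bP^1$-bundle over $K(G_2)$ exactly as it is over $\bQ^5$, so this case produces nothing new. Collecting the cases, $X$ is one of $\bP^1\times\bP^5$, $\bP^1\times\bQ^5$, $\bP^1\times K(G_2)$ or $\bP(\sC)$, each of which is rational homogeneous.

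The main obstacle is securing the classification input: the assertion that the only non-split rank-two Fano bundles on $\bP^5$, $\bQ^5$, $K(G_2)$ are (twists of) the Cayley bundle on $\bQ^5$ and the corresponding flag bundle on $K(G_2)$ rests on splitting theorems for Fano bundles over projective spaces and quadrics and on Ottaviani's characterization of the Cayley bundle; if the classification admitted further exotic bundles, each would have to be tested against the CP condition, and that would be the real work. Once the classification is granted, the remainder is essentially bookkeeping, the one substantive point being that a decomposable rank-two CP bundle is necessarily balanced (Proposition~\ref{trivb}).
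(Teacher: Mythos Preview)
Your proof is correct and follows essentially the same route as the paper: dispose of the decomposable case via Proposition~\ref{trivb} (the paper uses \ref{trivb6}$\Rightarrow$\ref{trivb1} in one line), then invoke the classification of indecomposable rank-two Fano bundles on the given $Y$. The only minor difference is in the $K(G_2)$ case: rather than appealing directly to a Fano-bundle classification on $K(G_2)$, the paper observes (using Proposition~\ref{cont} and \cite[Lemma~6.1]{MOS}) that the second elementary contraction of $X$ is again a smooth $\bP^1$-fibration, and then concludes by \cite[Theorem~6.5]{MOS} or \cite[Theorem~1.1]{W1}; this avoids having to justify your classification claim for $K(G_2)$ separately.
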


\begin{proof}
By Proposition~\ref{trivb} \ref{trivb6} $\Rightarrow$ \ref{trivb1} , we may assume that $\sE $ is indecomposable.
If $Y \simeq \bP ^5$ or $\bQ ^5$, then the assertion follows from~\cite[Main Theorem 2.4]{APW}.
If $Y \simeq K(G_2)$, then another elementary contraction of $X$ is a $\bP ^1$-bundle by Proposition~\ref{cont} and \cite[Lemma~6.1]{MOS}.
The assertion follows from \cite[Theorem~6.5]{MOS} or \cite[Theorem~1.1]{W1}.
\end{proof}

The main result of Section~2 is the following:
\begin{theorem} \label{triv}
Let $\sE$ be a CP bundle of rank $3$ on $\bP^4$ or $\bQ^4$.
Then $\bP (\sE) \simeq \bP ^2\times\bP ^4$ or $\bP ^2\times\bQ ^4$. 
\end{theorem}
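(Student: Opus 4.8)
The plan is to show that the slope $\tau$ of $(Y,\sE)$ vanishes; by Corollary~\ref{tau0} this forces $\bP(\sE)\simeq\bP^2\times Y$, which is exactly the assertion. So assume, for a contradiction, that $\tau>0$, and write $X=\bP(\sE)$, $\pi\colon X\to Y$, so that $\dim X=6$ and $\rho_X=2$. By Proposition~\ref{tau} the divisor $D\coloneqq-K_\pi+\tau H$ is nef, semiample and not ample, hence it defines the contraction $\varphi\colon X\to Z$ of the second extremal ray of $X$. By Proposition~\ref{cont}\,\ref{cont1} both $\varphi$ and $Z$ are smooth; since a smooth birational morphism of smooth projective varieties is an isomorphism and $\rho_X=2$, the morphism $\varphi$ is of fiber type. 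Being elementary, $\rho_Z=1$, and by Proposition~\ref{cont}\,\ref{cont3} a general fiber $F$ of $\varphi$ is a CP manifold with $\rho_F=\rho_X-\rho_Z=1$.

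Next I extract the structural constraints. Curves in a $\pi$-fiber $\simeq\bP^2$ span the ray of $\pi$, which is distinct from the ray of $\varphi$, so $\varphi$ is finite on each $\pi$-fiber; hence $\dim Z\geq2$. Symmetrically, curves in $F$ span the ray of $\varphi$, so $\pi|_F\colon F\to Y$ is finite; therefore $\dim F\leq\dim Y=4$, the line bundle $H|_F=(\pi|_F)^*\cO_Y(1)$ is ample, and $F$ is one of $\bP^1,\bP^2,\bP^3,\bQ^3,\bP^4,\bQ^4$ by Theorem~\ref{CP5}, while $Z$ is a CP manifold of Picard number one and dimension $6-\dim F\in\{2,3,4,5\}$, again on the short list of Theorem~\ref{CP5}. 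Writing $c_1(\sE)=c\,\cO_Y(1)$ and $-K_Y=F_Y\,\cO_Y(1)$ (so $F_Y=5$ if $Y\simeq\bP^4$, $F_Y=4$ if $Y\simeq\bQ^4$) gives $-K_\pi=3\xi-cH$ and $-K_X=3\xi+(F_Y-c)H$. Restricting to $F$ and using $D|_F\equiv0$ yields, in $N^1(F)_\bQ\cong\bQ$,
\[
3\,\xi|_F\equiv(c-\tau)\,H|_F,\qquad -K_F\equiv-K_X|_F\equiv(F_Y-\tau)\,H|_F .
\]
Hence, setting $H|_F=h\,\cO_F(1)$ with $h\geq1$ for the ample generator, the Fano index of $F$ equals $(F_Y-\tau)h$. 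As the index of a Fano $m$-fold is at most $m+1$ and $\dim F\leq4$, this gives $(F_Y-\tau)h\leq5$; together with $0<\tau<F_Y$ and the finitely many possible $F$, only finitely many tuples $(F,Z,\tau,h)$ survive. Moreover the inclusion $F\hookrightarrow X=\bP(\sE)$ over $Y$ exhibits $\xi|_F$ as a rank-one quotient of $(\pi|_F)^*\sE$, so there is an exact sequence $0\to\sQ\to(\pi|_F)^*\sE\to\xi|_F\to0$ with $\sQ$ locally free of rank two and $\xi|_F=x\,\cO_F(1)$ subject to $3x=(c-\tau)h$.

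It then remains to eliminate each surviving case. For this I would argue, depending on the case, either (i) that no finite morphism $\pi|_F\colon F\to Y$ with the prescribed pullback $H|_F=h\,\cO_F(1)$ can exist — invoking rigidity of finite morphisms between Fano manifolds of Picard number one (every finite endomorphism of $\bP^m$ or $\bQ^m$, $m\geq2$, is of the obvious type, and finite morphisms $\bQ^m\to\bP^m$, $\bP^m\to\bQ^m$, or those whose image is lower-dimensional, are ruled out by degree and Chern-class computations on $X$), or (ii) that the extension $0\to\sQ\to(\pi|_F)^*\sE\to\xi|_F\to0$, analyzed through the classification of Fano (equivalently CP) bundles of rank at most two over $F$ — or over $\bP^3,\bQ^3$ after base change along $\pi|_F$ — is forced to split in a way incompatible with $\tau>0$. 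The main obstacle is precisely this final elimination: keeping the bookkeeping of the admissible tuples $(F,Z,\tau,h,c)$ under control, and, in the cases with $\dim F=\dim Y$, showing that the putative finite cover $F\to Y$ together with the bundle data is inconsistent. I expect the low-dimensional cases $\dim F\in\{1,2\}$ (so $\dim Z\in\{5,4\}$, with $Z$ a four- or five-dimensional CP manifold of Picard number one, hence explicitly known) to be the most delicate; these should be resolved by playing the two projective-bundle structures on $X$ against each other and again appealing to Theorem~\ref{CP5}.
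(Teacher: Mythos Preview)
Your setup is correct and the overall strategy --- show $\tau=0$ and invoke Corollary~\ref{tau0} --- matches the paper's. But the proof has a genuine gap: the entire case elimination is left open. You reduce to a finite list of tuples $(F,Z,\tau,h)$ and then say only that you ``would argue'' via rigidity of finite morphisms between specific Fano varieties and via splitting of the rank-two kernel $\sQ$, acknowledging this as ``the main obstacle.'' Neither tool is invoked with a precise statement, and it is not clear either one suffices. For instance, when $\dim F=4$ and $F\simeq\bQ^4\simeq Y$ you would need that $\bQ^4$ admits no finite self-map of degree $>1$ (true, but a nontrivial external input), and the bundle-extension analysis is not spelled out at all. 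The cases $\dim F\in\{1,2\}$ you flag as ``most delicate'' are also untouched. As written, this is an outline, not a proof.

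The paper takes a completely different, purely computational route. Using the Grothendieck relation $\sum_i(-1)^i\pi^*c_i(\sE)\,\xi^{3-i}=0$, it expresses $(-K_\pi+\tau H)^5\cdot H$ and $(-K_\pi+\tau H)^6$ as explicit polynomials in $\tau$ and the Chern numbers of $\sE$. After first ruling out $\dim Z=5$ via Proposition~\ref{P1b}, one has $\dim Z\leq 4$, so both intersection numbers vanish. For $Y\simeq\bP^4$ the resulting pair of equations forces $\varDelta=c_1^2-3c_2$ to equal $\tfrac{5\pm3\sqrt5}{2}\,\tau^2$, whence $\tau=0$ immediately. For $Y\simeq\bQ^4$ the equations first pin $\tau$ to $\{1,2,3\}$ with $\dim Z=4$; then $Z\simeq\bQ^4$ (the case $Z\simeq\bP^4$ being already done), and exploiting the symmetry of the two $\bP^2$-bundle structures $X\to\bQ^4$ together with one more intersection computation yields the contradiction $c_1^2=2$. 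No classification of finite covers or of auxiliary bundles on $F$ is needed. I recommend replacing the fiber-analysis plan by this Chern-class computation.
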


Note that, by the definition of Chern classes, the following holds: 
\begin{align} \label{xi} 
\sum^{r}_{i=0} \left( -1 \right) ^i  \pi ^* c_i(\sE )\cdot \xi^{r-i} =0.
\end{align}

Since $X$ is a CP manifold with Picard number $\rho _{X} =2$, we have another elementary contraction $p\colon  X \to Z$. Note that $Z$ is a CP manifold and $p$ is a smooth morphism by Proposition~\ref{cont}.
Furthermore, we have $\dim Z \leq 4$. Otherwise, $X$ is a $\bP ^1$-bundle over rational homogeneous 5-fold $Z$. This contradicts Proposition~\ref{P1b}.
\begin{proof}[Proof of Theorem~\ref{triv} for the case $Y \simeq \bP ^4$]
\hfill

If $Y \simeq \bP ^4 $, then the hyperplane section $H_Y$ generates the Chow ring of $Y$.
We identify the $i$-th Chern classes $c_i(\sE )$ with an integer $c_i$.
We define two invariants:
\begin{align*}
\varDelta &\coloneqq {c_1}^2-3c_2, \\
\gamma &\coloneqq 2 {c_1}^3 - 9 c_1 c_2 + 27 c_3.
\end{align*}

By \eqref{xi} and direct computations, we have the following:

\begin{enumerate}
\item
$\left(-K_\pi \right)^3=9\,\varDelta  H^2\cdot\xi - \left(3c_1 \varDelta - \gamma \right) H^3$,
\item
$\left(-K_\pi \right) \cdot H^5=0$,\,
$\left(-K_\pi \right)^2 \cdot H^4=9$,\,
$\left(-K_\pi \right)^3 \cdot H^3=0$,\,
$\left(-K_\pi \right)^4 \cdot H^2=27\varDelta$,\,
$\left(-K_\pi \right)^5 \cdot H= 9 \gamma$,\,
$\left(-K_\pi \right)^6=81\varDelta ^2$.
\end{enumerate}

Hence,
\begin{align}
&\left( -K_\pi+ \tau H \right) ^5 \cdot H = 9 \left( 10\tau ^3 + 15\varDelta \tau + \gamma \right) \label{a=0},\\
&\left( -K_\pi+ \tau H \right) ^6 = 27 \left( 5\tau ^4 + 15\varDelta \tau ^2 + 2\gamma\tau + 3\varDelta ^2 \right). \label{b=0}
\end{align}

Since $\dim Z \leq 4$, two equalities
$\left( -K_\pi + \tau H \right)^5 \cdot H=0$ and 
$\left( -K_\pi + \tau H \right)^6=0$ hold.
By using \eqref{a=0} and \eqref{b=0}, we have
\begin{eqnarray*}
\varDelta=\frac{5\tau ^2 \pm 3\tau ^2 \sqrt{\mathstrut 5}}{2}.
\end{eqnarray*}
Hence $\tau = 0$, i.e.\ $-K_\pi$ is nef.
Therefore $X$ is isomorphic to $\bP ^2\times\bP ^4$ by Corollary~\ref{tau0}.
\end{proof}

\begin{proof}[Proof of Theorem~\ref{triv} for the case $Y \simeq \bQ ^4$]
\hfill

If $Y \simeq \bQ ^4 $, then the hyperplane section $H_Y$ and two planes $P_{1,Y}, P_{2,Y}$ generate the Chow ring of $Y$.
We will denote by $L_Y$ the class of a line on $\bQ ^4$.
The intersection products of them are as follows:
$H_{Y}^{2}=P_{1,Y}+P_{2,Y}$, $H_{Y} \cdot P_{i,Y}=L_{Y}$, $H_{Y} \cdot L_{Y}=1$, $P_{i,Y}^{2}=1$ and $P_{1,Y}\cdot P_{2,Y}=0$.
We identify the $i$-th Chern classes $c_i(\sE )$ with an integer $c_i$, except for $c_2(\sE )$ which we identify with a pair of integer $(a,b)$.
We will denote by $H$, $P_i$ and $L$ the pullback of $H_Y$,$P_{i,Y}$ and $L_Y$ on $X$.
By twisting $\sE$ with a line bundle, we may assume that $c_1=1$, $2$ or $3$.

Similarly to the case $Y \simeq \bP ^4$,  We define some invariants:
\begin{align*}
c_2 &\coloneqq c_{2}(\sE) \cdot H_{Y}^{2}=a+b,\\
\varDelta (\sE ) &\coloneqq c_{1}(\sE)^{2}-3c_{2}(\sE)={c_1}^2 H_{Y}^2 - 3 \left( aP_{1,Y} + bP_{2,Y} \right),\\
\delta &\coloneqq \varDelta(\sE) \cdot H_{Y}^{2}=2{c_1}^2-3c_2,\\
\gamma &\coloneqq 4 {c_1}^3 - 9 c_1 c_2 + 27 c_3,\\
\beta &\coloneqq \varDelta(\sE)^{2}= \dfrac{\delta ^2 + 9\left(a-b\right)^2}{2}.\\
\end{align*}

Then, by \eqref{xi} and direct computations, we have
\begin{enumerate}
\item
$\left(-K_\pi\right)^3=9\, \pi^{*}\varDelta(\sE) \cdot \xi - \left(3c_1 \delta - \gamma \right) L$.
\item
$\left(-K_\pi\right)\cdot H^5=0$,\,
 $\left(-K_\pi\right)^2\cdot H^4=18$,\,
 $\left(-K_\pi\right)^3\cdot H^3=0$,\,
 $\left(-K_\pi\right)^4\cdot H^2=27\delta$,\,
 $\left(-K_\pi\right)^5\cdot H=9\gamma $,\,
 $\left(-K_\pi\right)^6=81\beta$.

\end{enumerate}
Hence we obtain
\begin{align}
&\left(-K_\pi+ \tau H\right)^4 \cdot H^2= 27 \left( 4 \tau ^2 + \delta \right), \label{N^4}\\
&\left(-K_\pi+ \tau H\right)^5 \cdot H=9 \left( 20\tau ^3+15\delta\tau + \gamma \right), \label{N^5}\\
&\left(-K_\pi+ \tau H\right)^6=27 \left( 10\tau ^4+15\delta \tau ^2 + 2\gamma \tau + 3\beta \right). \label{N^6}
\end{align}

Since $\dim Z \leq 4$, we have two equalities
$\left(-K_\pi+ \tau H\right)^5.H=0$ and
$\left(-K_\pi+ \tau H\right)^6=0$,
that is,
\begin{eqnarray}
20\tau ^3+15\delta \tau + \gamma = 0, \label{A=0} \\
10\tau ^4+15\delta \tau ^2 + 2\gamma \tau + 3\beta = 0. \label{B=0} 
\end{eqnarray}
By \eqref{A=0}, \eqref{B=0} and the definition of $\beta$, we have 
\begin{align}
\delta = 5\,\tau ^2 \pm 3 \sqrt{ 5\tau ^4 - (a-b)^2}. \label{delta=}
\end{align}

\medskip
By Corollary~\ref{tau0}, it suffices to show that $\tau =0$.
Assume by contradiction that $\tau \neq 0$ in the sequel.

\begin{lemma}\label{taunum}
The following hold:
\begin{enumerate}
\item $\tau = 1$, $2$ or $3$, \label{taunum1}
\item $\left|a-b\right| = \tau ^2$ or $2\tau ^2$, \label{taunum2}
\item $\delta = 5 \tau ^2 \pm \dfrac{6\tau ^4}{\left|a-b\right|}$.
\label{taunum3}
\end{enumerate}
\end{lemma}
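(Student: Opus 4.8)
The plan is to combine the slope constraints from Proposition~\ref{tau} with the Diophantine nature of equation~\eqref{delta=}. First I would recall that $\tau \in \bQ$ by Proposition~\ref{tau}~\ref{tau2} and $0 \le \tau < F_{\bQ^4} = 4$ by Proposition~\ref{tau}~\ref{tau1}; under the standing assumption $\tau \ne 0$ this already forces $0 < \tau < 4$. To pin $\tau$ down to an integer I would analyze equation~\eqref{A=0}, namely $20\tau^3 + 15\delta\tau + \gamma = 0$ with $\delta = 2c_1^2 - 3c_2$ and $\gamma = 4c_1^3 - 9c_1c_2 + 27c_3$ integers: this shows $\tau$ is a rational root of a monic-up-to-scaling integer polynomial, and a rational-root / denominator-clearing argument together with the bound $\tau < 4$ should cut the possibilities down to $\tau \in \{1,2,3\}$. (One has to be slightly careful since $20\tau^3$ has leading coefficient $20$, so a priori $\tau$ could be a noninteger rational; writing $\tau = p/q$ in lowest terms, $q \mid 20$, but then feeding back into the expressions for $\delta, \gamma$ forces $q = 1$ — this is the kind of routine case-check I would relegate to a line or two.) That gives \ref{taunum1}.

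For \ref{taunum2} and \ref{taunum3}, I would start from~\eqref{delta=}, $\delta = 5\tau^2 \pm 3\sqrt{5\tau^4 - (a-b)^2}$, and use that $\delta$ is an integer (indeed with $\tau$ now an integer, $5\tau^2$ is an integer) together with $\tau \ne 0$. Rationality of $\delta$ forces the radicand $5\tau^4 - (a-b)^2$ to be a perfect square, say $m^2$ with $m \ge 0$ an integer; so $(a-b)^2 + m^2 = 5\tau^4$. The real constraint comes from the remaining data: I still have equations~\eqref{N^4}, \eqref{N^5}, \eqref{N^6} encoding that $-K_\pi + \tau H$ is nef but not ample, i.e.\ semiample defining the contraction $p \colon X \to Z$ with $\dim Z \le 4$. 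In particular~\eqref{N^4} gives $(-K_\pi + \tau H)^4 \cdot H^2 = 27(4\tau^2 + \delta) \ge 0$, and nefness should give further sign/divisibility conditions — for instance that $\delta$ and $|a-b|$ cannot be too large relative to $\tau$ because the bundle must remain a genuine rank-$3$ CP bundle (Chern class positivity/semistability-type bounds). Solving $(a-b)^2 + m^2 = 5\tau^4$ for $\tau \in \{1,2,3\}$ and intersecting with those bounds should leave exactly $|a-b| \in \{\tau^2, 2\tau^2\}$ (e.g.\ for $\tau=1$: $(a-b)^2 + m^2 = 5$ gives $\{|a-b|,m\} = \{1,2\}$), and back-substituting $m = 3\tau^2/|a-b|$ into $\delta = 5\tau^2 \pm 3m$ — wait, more precisely $3\sqrt{5\tau^4-(a-b)^2} = 3m$ and one checks $m = \tau^4/|a-b| \cdot (\text{something})$; the clean statement \ref{taunum3}, $\delta = 5\tau^2 \pm 6\tau^4/|a-b|$, should drop out after simplifying the constraint $(a-b)^2 + m^2 = 5\tau^4$ in the form $(2(a-b))^2 \cdot$(stuff), i.e.\ from $|a-b| \in \{\tau^2, 2\tau^2\}$ one gets $5\tau^4 - (a-b)^2 \in \{4\tau^4, \tau^4\}$ so $3\sqrt{\cdot} \in \{6\tau^2, 3\tau^2\}$, which rearranges to $6\tau^4/|a-b|$ in both cases.

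The main obstacle I anticipate is step \ref{taunum2}: extracting the precise values $|a-b| = \tau^2$ or $2\tau^2$ rather than merely "$5\tau^4 - (a-b)^2$ is a nonnegative perfect square." The purely number-theoretic equation $(a-b)^2 + m^2 = 5\tau^4$ has, for larger $\tau$, several solutions (already $\tau=2$: $(a-b)^2 + m^2 = 80$ has $(a-b) \in \{4, 8\}$ among positives, matching $\tau^2=4$ and $2\tau^2=8$, but one must rule out, and there are none others up to order — this is lucky and worth double-checking), so one genuinely needs to exploit the geometry: the nef-not-ample divisor $-K_\pi + \tau H$ must be the pullback of an ample class from a smooth CP manifold $Z$ of dimension $\le 4$ (Proposition~\ref{cont}, Proposition~\ref{tau}~\ref{tau3}), and the fibers of $p$ are themselves CP manifolds whose numerical invariants are controlled. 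I expect the argument uses~\eqref{N^4} to force $4\tau^2 + \delta \ge 0$ with equality iff $\dim Z \le 3$, plus an analogous lower bound, to trap $\delta$ in an interval that, combined with $\delta = 5\tau^2 \pm 3m$ and the Diophantine equation, isolates the two claimed cases; making the boundary cases of those inequalities precise, and checking no spurious solution survives, is where the real work sits.
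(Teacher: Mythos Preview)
Your overall strategy matches the paper's, but two points deserve correction.

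For \ref{taunum1}, the paper does not use the cubic \eqref{A=0}; it uses the quartic \eqref{B=0}, whose leading coefficient is $10$. Writing $\tau = m/10$ with $m \in \bZ$ (rational root theorem, since $\delta,\gamma,\beta \in \bZ$) and clearing denominators in \eqref{B=0} yields $m^4 = -10(15\delta m^2 + 20\gamma m + 300\beta)$, so $10 \mid m^4$ and hence $10 \mid m$; thus $\tau \in \bZ$, and then $0 < \tau < 4$ finishes. Your route through \eqref{A=0} gives only $q \mid 20$, and your claim that ``feeding back into the expressions for $\delta,\gamma$ forces $q=1$'' is not an argument: $\delta$ and $\gamma$ are integers defined independently of $\tau$, so there is nothing to feed back into. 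Switch to \eqref{B=0} and the step is one line.

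For \ref{taunum2}, you have correctly isolated the Diophantine constraint $(a-b)^2 + k^2 = 5\tau^4$, but then you talk yourself into needing geometric input (nefness, bounds on $\dim Z$, Chern-class positivity) to cut down the solutions. None of that is necessary, and the paper uses none of it: once $\tau \in \{1,2,3\}$, one simply lists the representations of $5$, $80$, $405$ as sums of two squares. In each case the only solutions with $|a-b| > 0$ are $|a-b| \in \{\tau^2, 2\tau^2\}$ (for $\tau=3$ note $405 = 9^2 \cdot 5$, so the representations are $9$ times those of $5$). The case $|a-b|=0$ is excluded because $5\tau^4$ is not a perfect square for $\tau \in \{1,2,3\}$. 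Your computation of \ref{taunum3} from these two values is correct.
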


\begin{proof}
\ref{taunum1} By Proposition~\ref{tau}, it is enough to see that $\tau\in\bZ$.
Note that $\tau$ is a solution of the equation \eqref{B=0}.
Since $\delta$, $\gamma$, $\beta \in \bZ  $ and $\tau \in \bQ $ (by Proposition~\ref{tau}), we can write $\tau = m/10$ with $m\in \bZ $.
Then, by \eqref{B=0},
 $m^4 = -10 \left(15\delta m^{2}+20 \gamma m + 300 \beta\right)$.
So we have $m \equiv 0 \mod 10$.

\ref{taunum2} By \eqref{delta=}, there exists an integer $k$ such that $5\tau ^4 - \left(a-b \right)^2 = k^{2}$.
Therefore, for each $\tau=1$, $2$ or $3$, we have \ref{taunum2}.

Now, the assertion \ref{taunum3} follows from \eqref{delta=}.
\end{proof}

In any case, we have $\left( -K_\pi + \tau H \right)^4 \cdot H^2=27 \left(4 \tau ^2 + \delta \right) >0$.
Hence $\dim Z = 4$. 
Therefore, $Z \simeq \bP ^4$ or $\bQ ^4$ and $p$ is a $\bP ^{2}$-bundle by classification of CP surfaces and $4$-folds.
If $Z \simeq \bP ^4$, then we may apply Theorem~\ref{triv} for $p \colon X \to Z$, which we have already shown.
Then we have $X \simeq \bP ^2 \times \bP ^4$, a contradiction.
Hence $Z \simeq \bQ^{4}$:
\[\xymatrix{
                           &    X \ar[ld]_{\pi}    \ar[rd]^{p}     &                            \\
Y \simeq \bQ^4  &                                                     &  Z \simeq \bQ^4.  \\
}\]

There exists a rank $3$ vector bundle $\sF $ over $Z \simeq \bQ ^4$ such that $X \simeq \bP (\sF)$. We may assume that $c_{1}(\sF)=1$, $2$ or 3.
We will denote by $\eta$  the class of  tautological bundle on $\bP (\sF )$.

\begin{lemma}\label{ch}
The following hold:
\begin{enumerate}
\item  $\tau = c_{1}$.
In particular, $\sE $ is nef but not ample. \label{ch1}

\item
$c_2(\sE )=\left( {c_1}^2,0 \right) \text{ or } \left( 0,{c_1}^2 \right)$, and
$c_3=0$. \label{ch2}
\item
$27 \left(4 \tau ^2 + \delta \right) = 3^4 {c_1}^2$.

\label{ch3}
\end{enumerate}
\end{lemma}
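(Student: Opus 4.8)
The plan is to play the two projective bundle structures of $X$ against each other. Write $p\colon X=\bP(\sF)\to Z\simeq\bQ^4$ for the second contraction, $H'=p^*\cO_{\bQ^4}(1)$, $\eta$ for the tautological class of $\bP(\sF)$, and normalise $c_1(\sF)\in\{1,2,3\}$ as for $\sE$; thus $\{H,\xi\}$ and $\{H',\eta\}$ are both $\bZ$-bases of $\Pic X$, and since $\rho_X=2$ the nef cone of $X$ is the two-dimensional cone spanned by $H$ and $H'$, with extremal contractions $\pi$ and $p$.

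For part~(1): the divisor $-K_\pi+\tau H$ is nef, non-ample and semiample by Proposition~\ref{tau}, so it defines a contraction; since $(-K_\pi+\tau H)\cdot\ell_\pi=-K_\pi\cdot\ell_\pi=3>0$ for a line $\ell_\pi$ of a $\pi$-fibre, this contraction is not $\pi$, hence equals $p$, giving $-K_\pi+\tau H=\mu H'$ with $\mu\in\bZ_{>0}$ (integral because $\tau\in\bZ$ by Lemma~\ref{taunum} and $H'$ is primitive in $\Pic X$). Rewriting via $-K_\pi=3\xi-c_1H$ yields $3\xi+(\tau-c_1)H=\mu H'$; expanding $H'$ in the basis $\{H,\xi\}$ forces $\mu\mid 3$, and $\mu=3$ exactly when $3\mid(\tau-c_1)$, i.e.\ (as $|\tau-c_1|\le 2$) exactly when $\tau=c_1$, in which case $\xi=H'$. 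It remains to exclude the alternative $\mu=1$, $\tau\neq c_1$. For this I restrict to a line $\ell'$ of a $p$-fibre $F\simeq\bP^2$: as $N_{F/X}$ is trivial, adjunction gives $-K_X|_F=\cO_{\bP^2}(3)$, so $-K_X\cdot\ell'=3$; combined with $-K_X=-K_\pi+4H$ and $(-K_\pi+\tau H)\cdot\ell'=0$ this yields $(4-\tau)m=3$ with $m=H\cdot\ell'\ge 1$ an integer, whence $\tau\in\{1,3\}$, and symmetrically the slope $\tau'$ of $(Z,\sF)$ lies in $\{1,3\}$. One then substitutes the finitely many remaining possibilities into \eqref{delta=} (equivalently Lemma~\ref{taunum}) and into \eqref{A=0}, and uses the integrality of $\delta$, of $c_3$, and of the class $H'=(\tau-c_1)H+3\xi$ to rule out every case with $\tau\neq c_1$; this proves $\tau=c_1$, and then $-K_\pi+\tau H=3\xi$ is nef and not ample, i.e.\ $\sE$ is nef but not ample.

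For parts~(2) and~(3): now that $\tau=c_1$ (and, symmetrically, $c_1(\sF)=\tau'$, $\xi=H'$, $\eta=H$), \eqref{delta=} reads $|a-b|\in\{c_1^2,2c_1^2\}$ with $\delta=5c_1^2\pm 6c_1^4/|a-b|$, while $\delta=2c_1^2-3(a+b)$ and, from \eqref{A=0}, $27c_3=-18c_1(c_1^2+\delta)$. To eliminate all admissible tuples but one I would feed $\xi=H'$ back into \eqref{xi}, computing $c_2(\sE)$ and $c_3(\sE)$ as $\pi_*$ of powers of $H'=p^*\cO_{\bQ^4}(1)$ and matching against the corresponding data of $\sF$ on $Z$; this pins down $a+b=c_1^2$, $|a-b|=c_1^2$ and $c_3=0$, that is, $c_2(\sE)=(c_1^2,0)$ or $(0,c_1^2)$ and $c_3=0$. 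Part~(3) is then immediate algebra: $c_2=c_2(\sE)\cdot H_Y^2=c_1^2$ gives $\delta=2c_1^2-3c_1^2=-c_1^2$, hence $4\tau^2+\delta=4c_1^2-c_1^2=3c_1^2$ and $27(4\tau^2+\delta)=3^4c_1^2$.

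The main obstacle is precisely this elimination of spurious numerical solutions in parts~(1) and~(2): the equations \eqref{A=0}, \eqref{B=0} and Lemma~\ref{taunum} leave a short list of candidate Chern data, and cutting it down to the single surviving family requires carefully exploiting the integrality of the tautological and pulled-back divisor classes on both sides of the diagram $Y\xleftarrow{\pi}X\xrightarrow{p}Z$; the back-and-forth between the $\bP(\sE)$- and $\bP(\sF)$-pictures is what makes it delicate.
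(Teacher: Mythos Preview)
Your overall strategy matches the paper's---play the two $\bP^2$-bundle structures against one another---but two key steps are not actually executed, and the tools you name are not quite the ones that close the argument.

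For part~(1), once you reach $\mu=1$ and $\tau\in\{1,3\}$, you wave at a finite elimination using ``integrality of $\delta$, of $c_3$, and of the class $H'$''. The class $H'=(\tau-c_1)H+3\xi$ is already integral, so that constraint is vacuous, and integrality of $\delta$ and $c_3$ alone does not kill the remaining cases. The paper avoids any case split: from $-K_\pi+\tau H=p^*H_Z$ (your $\mu=1$) one computes $-K_p=-3\,p^*H_Z+(4-\tau)H$, and since $-K_p+\tau_Z\,p^*H_Z$ must be a positive multiple of $H$, one reads off $\tau_Z=3$ immediately. Lemma~\ref{taunum} applied to $(Z,\sF)$ then gives $\delta_Z\equiv 0\pmod 3$, forcing $c_1(\sF)=3=\tau_Z$ and hence $\eta=H$. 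Now $3\xi=(c_1-\tau)\eta+p^*H_Z$ has coefficient $1$ on $p^*H_Z$, contradicting that $(\eta,p^*H_Z)$ is a $\bZ$-basis of $\Pic X$. No enumeration is needed.

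For part~(2), you propose to recover $c_2(\sE)$ and $c_3(\sE)$ by pushing powers of $H'=\xi$ along $\pi$ and matching against $\sF$. This is not carried out, and the numerical constraints you do list fail to eliminate spurious tuples: for example $|a-b|=c_1^2$, $\delta=11c_1^2$, $\{a,b\}=\{-c_1^2,-2c_1^2\}$, $c_3=-8c_1^3$ satisfies Lemma~\ref{taunum}, \eqref{A=0} and \eqref{B=0} with integral $\delta$ and $c_3$. The missing one-line observation is that, by part~(1), $\sE$ is nef, so positivity of Chern classes of nef bundles gives $c_2(\sE)\cdot P_{i,Y}\ge 0$, i.e.\ $a\ge 0$ and $b\ge 0$. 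Combined with $|a-b|\in\{c_1^2,2c_1^2\}$ and $\delta=2c_1^2-3(a+b)$ from Lemma~\ref{taunum} this forces $\{a,b\}=\{c_1^2,0\}$ and $\delta=-c_1^2$; then \eqref{A=0} gives $c_3=0$. Part~(3) is then the immediate algebra you wrote.
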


\begin{proof}
\ref{ch1}
Note that $\sE $ is nef but not ample if and only if $\tau = c_1$.

Assume to the contrary that $\tau \neq c_1$.
Then, since $\left| \tau -c_1 \right| = 1$ or $2$, $-K_\pi+ \tau H$ is not a multiple of another divisor. 
Hence we have
\begin{align}
 -K_\pi+ \tau H= p^* H_Z, \label{primitive}
\end{align}
where $H_Z$ is the   ample generator of $\Pic(Z$). 

Since $-K_{Z} = 4 H_{Z}$,
we have $p^{*}\left(-K_{Z}\right) = 12 \xi + \left(4 \tau - 4c_{1} \right)H$. 
Hence, we have
\[
-K_{p}= -K_{X} - p^{*} \left( -K_{Z} \right) = -3 p^{*} H_{Z} + \left( 4  -  \tau \right)H.
\]
Therefore the slope $\tau _{Z}$ for the pair $(Z,\sF)$ is $3$.

Then, by Lemma~\ref{taunum}~\ref{taunum2} and \ref{taunum3} for $(Z,\sF)$, we have
\[
\delta_{Z} \coloneqq \left(c_{1}(\sF)^{2}-3c_{2}(\sF) \right) \cdot {H_{Z}}^{2} \equiv 0 \mod3.
\]
Hence $c_{1}(\sF) = 3$.

Since $\tau _{Z} = c_{1}(\sF)$, $\eta$ is nef but not ample.
Hence $\eta = H$.
By \eqref{primitive},
\[
3 \xi = \left( c_{1}-\tau \right) \eta + p^{*} H_{Z}.
\]
This contradicts the fact that $(\eta , p^{*} H_{Z})$ is a $\bZ$-basis of $\Pic (X)$.

\ref{ch2}
By \ref {ch1}, we have $a \geq 0 $, $b \geq 0$ and $\tau = c_1$.
Then, by Lemma~\ref{taunum} and the definition of $\delta$, we have the first assertion.
The second assertion follows from the equation~\eqref{A=0}.

\ref{ch3}
The assertion follows since $\delta = -\tau ^2$.
\end{proof}

Then, by Lemma~\ref{ch}~\ref{ch1} and the symmetry of $\pi \colon  X \to Y$ and $p \colon X \to Z$,  we have $\xi = p^* H_Z$, $\eta = H$ and $-K_\pi + \tau H = 3 \xi$.
Hence
\[
\left( p^* H_Z \right) ^4 \cdot \eta ^2 =
\xi ^4 \cdot H^2 =
\frac{\left( -K_\pi + \tau H \right) ^4 \cdot H^2}{3^4} \overset{\text{by }\eqref{N^4}}=
    \frac{27 \left( 4 \tau ^2 + \delta \right)}{3^4}  =
     c_{1}^{2}.
\]
The last equation follows from Lemma~\ref{ch}~\ref{ch3}.

By Lemma~\ref{taunum}~\ref{taunum1}, we have $c_1 ^2 = 1$, $4$ or $9$.
On the other hand, since $p$ is a $\bP ^2$-bundle over $\bQ^4$, we have $\left( p^* H_Z \right)^4.\eta ^2 = 2$.
This gives a contradiction.
Hence $\tau =0$, completing the proof. 
\end{proof}

\section{Products of CP manifolds}
In this section, we prove Proposition~\ref{pPb} below, which we use to prove that a certain CP manifold is a product of CP manifolds.

\begin{proposition}\label{pPb}
Let $X$ be a smooth projective variety
and suppose that, for some integer $r \geq 2$, there exist   a smooth contraction $f \colon X \to Y$ of relative dimension $r-1$ and   another contraction $g \colon X \to Z$ onto an  $r-1$-dimensional manifold $Z$.

Assume that $g$ does not contract any curve contained in an $f$-fiber, and assume moreover that one of the following holds:
\begin{enumerate}
\item $f$ is a smooth $\bP ^{r-1}$-fibration, or
\item every $f$-fiber is a smooth hyperquadric of dimension $r-1 \geq 3$.
\end{enumerate}
Then $-K_{f}$ is nef.
\end{proposition}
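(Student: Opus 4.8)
The plan is to show that $-K_f$ is nef by checking it against each extremal ray of $\cNE(X)$, using the fact that, since $f$ is a smooth contraction with fiber a projective space or a hyperquadric (both of which have Picard number one), the cone $\cNE(X)$ has exactly one more extremal ray than $\cNE(Y)$, namely the ray $R_f$ contracted by $f$. On $R_f$ the divisor $-K_f$ is clearly nef (indeed $-K_f$ restricted to an $f$-fiber is $-K$ of that fiber, which is ample). So the whole content is to prove that $-K_f$ is nonnegative on every extremal ray $R$ of $\cNE(X)$ different from $R_f$, equivalently on every extremal ray that maps onto an extremal ray of $\cNE(Y)$. Fix such a ray $R$ and let $C$ be a rational curve generating it; I want to produce a suitable surface or subvariety on which to compute, or else exploit the second contraction $g$.

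First I would use the hypothesis on $g$: since $g$ does not contract any curve inside an $f$-fiber, the restriction $g|_{F}\colon F \to Z$ to an $f$-fiber $F$ is a finite morphism, and as $\dim F = r-1 = \dim Z$ it is surjective, hence $g|_F$ is finite surjective of some degree $d\ge 1$; in case (1), $F\simeq \bP^{r-1}$ forces $g|_F$ to be an isomorphism (a finite surjection $\bP^{r-1}\to Z$ with $Z$ smooth of the same dimension, being étale in codimension one by purity and $\bP^{r-1}$ simply connected, is an isomorphism), and similarly in case (2) with $F\simeq \bQ^{r-1}$, $r-1\ge 3$, one gets $Z\simeq \bQ^{r-1}$ and $g|_F$ an isomorphism, using $\pi_1(\bQ^{r-1})=1$ for $r-1\ge 3$. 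Thus $g$ is itself a smooth fibration (each fiber $F$ maps isomorphically to $Z$), so $g$ is a $Z$-bundle over $Y$ in the sense that $X\to Y\times Z$ via $(f,g)$ is quasi-finite, in fact $f\times g\colon X\to Y\times Z$ is a finite morphism, and comparing dimensions ($\dim X = \dim Y + r - 1 = \dim Y + \dim Z$) it is finite surjective. Then I would argue it is birational, hence an isomorphism since $Y\times Z$ is smooth: birationality follows because over a general point of $Z$ the fiber of $g$ is $Y$ and $f$ restricted to it is an isomorphism onto $Y$ (as $g|_F$ is an isomorphism for every $F$, the two projections separate points generically). Once $X\simeq Y\times Z$ with $f$ the first projection, $-K_f = \pr_Z^*(-K_Z)$, which is nef because $Z$ is a projective space or a hyperquadric.

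Alternatively, and perhaps more in the spirit of the paper's numerical arguments, I would avoid the topological input and instead argue directly on extremal rays: let $R\ne R_f$ be an extremal ray of $\cNE(X)$ and $\varphi_R\colon X\to W$ its contraction; I want $-K_f\cdot R\ge 0$. Pick a minimal rational curve $C$ in $R$. Its image $f(C)$ is a curve in $Y$ (nonzero since $R\ne R_f$), and its image $g(C)$ in $Z$ is either a point or a curve. If $g(C)$ is a point, then $C$ lies in a fiber of $g$, but a fiber of $g$ maps isomorphically to $Y$ under the identification above — or, without that identification, $C\subset g^{-1}(\mathrm{pt})$ and $g^{-1}(\mathrm{pt})\to Y$ is finite, so $C$ is not contracted by $f$ and moreover $-K_f\cdot C = -K_{g^{-1}(\mathrm{pt})}\cdot C' \ge 0$ after pulling back, since a general fiber of $g$ being smooth Fano (its tangent bundle is a quotient issue) — this is getting delicate, so I expect the cleanest route really is the isomorphism $X\simeq Y\times Z$.

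The main obstacle, and the step I would spend the most care on, is proving that the combined morphism $(f,g)\colon X\to Y\times Z$ is an isomorphism, or equivalently that $g$ restricted to each $f$-fiber is an isomorphism onto $Z$: this needs the fact that a finite surjective morphism from $\bP^{r-1}$ (resp.\ from $\bQ^{r-1}$ with $r-1\ge 3$) onto a smooth variety of the same dimension is an isomorphism, which rests on purity of the branch locus together with simple connectedness and, for the quadric case, on $\Pic$ and the classification of smooth varieties dominated by a quadric. Granting this, the conclusion $-K_f=\pr_Z^*(-K_Z)$ nef is immediate. If one wants to stay closer to the extremal-ray bookkeeping of Proposition~\ref{cont}, the obstacle reappears as the need to show that every extremal ray other than $R_f$ is generated by a curve that is a section-like lift of a curve in $Y$ over which $-K_f$ is computed as a pullback, which ultimately uses the same rigidity of $\bP^{r-1}$ and $\bQ^{r-1}$.
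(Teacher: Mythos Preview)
Your argument has a genuine gap at the step you yourself flag as the main obstacle. The claim that a finite surjective morphism $g|_F\colon F\to Z$ from $F\simeq\bP^{r-1}$ (or $\bQ^{r-1}$) onto a smooth $Z$ of the same dimension is an isomorphism, ``by purity of the branch locus together with simple connectedness'', is false as stated. Purity says that if the ramification locus is nonempty then it is a divisor; it does \emph{not} say the map is \'etale in codimension one. And simple connectedness of the \emph{source} is useless here: to trivialize an \'etale cover you need simple connectedness of the \emph{target}, and the map is not \'etale anyway. The morphism $[x_0:\cdots:x_{r-1}]\mapsto[x_0^{2}:\cdots:x_{r-1}^{2}]$ on $\bP^{r-1}$ is finite surjective of degree $2^{r-1}$ onto a smooth target, so the claimed rigidity fails. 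Lazarsfeld's theorem would give $Z\simeq\bP^{r-1}$, and there are analogous results for quadrics, but even granting those deep inputs you still do not know $g|_F$ has degree one; that would require a further argument exploiting that $g$ is a contraction (e.g.\ saturation of $g^*\Pic(Z)$ in $\Pic(X)$ combined with control of the nef cone), which you have not supplied and which in any case lies well beyond the hints you give.

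The paper proceeds entirely differently and much more directly. It does \emph{not} attempt to prove the stronger statement $X\simeq Y\times Z$ inside the proposition; it only proves $-K_f$ is nef. The method is: reduce to the case where $Y$ is a smooth curve (so $\rho_X=2$ and $g$ is the unique other contraction, given by the semiample divisor $-K_f+\tau F$), then compute the single intersection number $(-K_f+\tau F)^r$. In case~(1) one has $X=\bP(\sE)$ over the curve and a one-line Chern-class computation gives $(-K_f+\tau F)^r = r^{r} \tau$ up to a positive factor, so $\dim Z=r-1$ forces $\tau=0$. In case~(2) one uses a result of Araujo to realize $X$ as a relative quadric inside a $\bP^r$-bundle and does the analogous computation. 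No topology, no Lazarsfeld, no product decomposition. The product statement $X\simeq Y\times Z$ is established \emph{afterwards} in the applications (Theorems~\ref{2rho} and \ref{k=4}) using Proposition~\ref{trivb} or a separate purity argument, once the ambient CP-manifold structure is available.
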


\begin{remark}
If $f$ is a \emph{$\bP ^{r-1}$-bundle} and $X$ is a Fano manifold, then the proposition follows from \cite[Lemma~4.1]{NO}.
\end{remark}

\begin{proof}
We may assume that $Y$ is a smooth projective curve.
Then $\rho_{Y}=2$ in any case.
We introduce an invariant $\tau$ which satisfies $-K_f + \tau F$ is nef but not ample, where $F$ is the numerical equivalence class of an $f$-fiber (cf. Definition~\ref{slope}).
Then $g$ is defined by the semiample divisor $-K_f + \tau F$.

\medskip
First we treat the case where $f$ is a smooth $\bP ^{r-1}$-fibration.
In this case $f$ is a smooth $\bP ^{r-1}$-bundle (see Remark~\ref{rem_bundle}~\ref{rem_bundle2}).
Hence, we have $X \simeq \bP(\sE)$ for some vector bundle $\sE$ over $Y$. 
We will denote by $F \in N^1(X)$ the class of a fiber and by $\xi \in N^1(X)$ the class of the tautological divisor.

Since $\dim Z = r-1$, we have $\left( -K_f+ \tau F \right)^{r}=0$.
Note that $\xi ^r = \deg(\det \sE)$ and $-K_f \nequiv r \xi - \deg(\det \sE)F$.
Hence we have $\tau =0$, namely, $-K_{f}$ is nef.

\medskip

Next, we  treat the case where every $f$-fiber is a smooth quadric.
We will denote by $F \in N^1(X)$ the class of an $f$-fiber.

By \cite[Proposition~21]{Ar2},
there exists a triple $(\sE,\sL,s)$ which satisfies the following:
\begin{enumerate}
\item $\sE $ (resp. $\sL $) is a vector bundle of rank $r+1$ (resp.\ a line bundle)  over $Y$.
\item $q \in H^0 (S^2 \sE  \otimes \sL )$.
\item $X$ is a zero scheme of $q$ in $\bP (\sE )$
\end{enumerate}
Set $d \coloneqq \deg(\det\sE)$ and $\ell \coloneqq \deg\sL$.
Since $f$ is smooth, we have 
\begin{align}
 -2d = \left(r+1\right)\ell. \label{deg}
\end{align}

By adjunction, we have $-K_{f} \nequiv \left(r-1\right)\xi_X - \left(d+\ell \right) F$, where $\xi _X$ is the restriction of the tautological divisor $\xi$ on $\bP (\sE )$.

Since $\dim Z = r-1$, we have $\left( -K_f+ \tau F \right) ^{r}=0$,
that is, 
\begin{align}
 \bigl( \left( r-1 \right) \xi_X \bigr) ^{r} + r\left( \tau -d- \ell \right)  \bigl( \left(r-1\right)\xi _X \bigr)^{r-1} \cdot F =0. \label{quad}
\end{align}
Note that $X \nequiv 2\xi + \ell F' $  in $N^1(\bP(\sE)) $ and $\xi ^{r+1} = d$, where $F'$ is the class of a fiber of $\bP(\sE) \to Y$.
Hence  we have ${\xi _X}^{r} =2d+\ell$ and ${\xi _X}^{r-1}  \cdot F =2$. 
Therefore, by \eqref{deg} and \eqref{quad}, we have $\tau = 0$.
\end{proof}

\section{CP Manifolds with Large Picard Number}\label{large-rho}
In this section, we prove Theorems~\ref{k=2} and \ref{k=4}, which will complete our proof of Theorem~\ref{rhon-4}.
Theorem~\ref{k=2} was obtained independently by K. Watanabe.
See also \cite[Proposition~2.4]{BCDD}, \cite[Proposition~5.1]{NO} or \cite[Proposition~2.3]{W2} for the case $n - \rho _{X} = 0$ or $1$.
We include our proof of them for completeness of our treatment.

First we prove the following:
\begin{theorem}\label{2rho}
Let $X$ be a CP $n$-fold which does not admit a contraction onto an FT manifold.
Then  $n \geq 2\rho_{X}$.
Furthermore, the following hold:
\begin{enumerate}

\item
If the equality holds, then $X \simeq \pbP{2}{\rho_X}$ \label{2rho1}.

\item
If $ n = 2\rho_X+1$, then
$X \simeq \pbP{2}{\rho_X -1} \times \bP ^3$,
$\pbP{2}{\rho_X -1} \times \bQ ^3$,
$\pbP{2}{\rho_X -2} \times \bP (\sS _i)$ or
$\pbP{2}{\rho_X -2} \times \bP (T_{\bP ^3} )$. \label{2rho2}

\end{enumerate}
\end{theorem}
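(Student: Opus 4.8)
The plan is to argue by induction on $\rho := \rho_X$, proving at once the bound $n \ge 2\rho$ and the classifications in (1) and (2). For $\rho = 1$, a CP manifold of Picard number one is not $\bP^1$ (else the identity map would be a contraction onto an FT manifold), hence has dimension at least $2$, and the subcases $n=2$, $n=3$ follow from the classification of CP surfaces and threefolds of Picard number one (Theorem~\ref{CP5}), namely $\bP^2$, and $\bP^3$ or $\bQ^3$. For the inductive step, since $X$ is not an FT manifold I would choose an elementary contraction $\pi \colon X \to Y$ that is \emph{not} a smooth $\bP^1$-fibration: by Proposition~\ref{cont} it is smooth, $Y$ is a CP manifold with $\rho_Y = \rho - 1$, and each fibre $F$ is a CP manifold of Picard number one with $F \not\simeq \bP^1$, so $\dim F \ge 2$. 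The crucial point is that $Y$ admits no contraction onto an FT manifold: otherwise $Y \simeq G \times M$ with $M$ an FT manifold by Proposition~\ref{iniFT}, and composing $\pi$ with the projection to $M$ would give a contraction of $X$ onto $M$ (for a Fano manifold every surjective morphism with connected fibres onto a normal projective variety is a contraction, $\cNE$ being simplicial), contradicting the hypothesis on $X$. Hence the inductive hypothesis applies to $Y$, giving $\dim Y \ge 2(\rho-1)$, and therefore $n = \dim Y + \dim F \ge 2\rho$.

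One auxiliary fact I would use repeatedly is that a CP bundle of rank $3$ or $4$ over a product $\pbP{2}{m}$ is trivial: by Proposition~\ref{trivb} it suffices to trivialize its restriction to the fibres of the elementary contractions of $\pbP{2}{m}$, which are copies of $\bP^2$, and a rank $3$ (resp.\ rank $4$) CP bundle over $\bP^2$ yields a CP fourfold (resp.\ fivefold) carrying a $\bP^2$-bundle (resp.\ $\bP^3$-bundle) structure over $\bP^2$ — and the only such manifolds are $\bP^2 \times \bP^2$ and $\bP^2 \times \bP^3$ by the classifications of CP fourfolds and fivefolds. Now for the equality case $n = 2\rho$ one is forced to have $\dim F = 2$, $F \simeq \bP^2$ and $\dim Y = 2(\rho-1)$, so $Y \simeq \pbP{2}{\rho-1}$ by induction; then $\pi$ is a $\bP^2$-bundle $\bP(\sE)$ over a rational variety with $\sE$ a rank $3$ CP bundle (Remark~\ref{rem_bundle}), hence $\sE$ is trivial by the auxiliary fact, and $X \simeq \pbP{2}{\rho}$.

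For $n = 2\rho + 1$ one has $\dim F \in \{2,3\}$. If $\dim F = 3$, then $Y \simeq \pbP{2}{\rho-1}$ and $F \simeq \bP^3$ or $\bQ^3$ (constant along $Y$ because $\pi$ is smooth and proper): in the first case $\pi$ is a $\bP^3$-bundle and the auxiliary fact gives $X \simeq \pbP{2}{\rho-1} \times \bP^3$; in the second case one shows $-K_\pi$ is nef by base-changing $\pi$ to a line in a $\bP^2$-factor of $Y$ so as to land in the situation of Proposition~\ref{pPb}, whence $X \simeq \pbP{2}{\rho-1} \times \bQ^3$. If $\dim F = 2$, then $F \simeq \bP^2$, $\dim Y = 2\rho_Y + 1$, and by induction $Y$ is one of the four families of the statement; writing $\pi = \bP(\sE)$ with $\sE$ a rank $3$ CP bundle over the rational variety $Y$, I would restrict $\sE$ to the fibres of the elementary contractions of $Y$. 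Each such fibre is $\bP^2$, $\bP^1$, $\bQ^3$ or $\bP^3$, and over every fibre except possibly a $\bP^3$-fibre the restriction is trivial (again by the classifications of CP fourfolds, of CP fivefolds, and of CP bundles over $\bP^1$). If the restriction over each $\bP^3$-fibre is trivial — automatic unless $Y$ has a $\bP^3$-factor — then $\sE$ is trivial and $X \simeq \bP^2 \times Y$, a member of the list. If $Y \simeq \pbP{2}{\rho_Y-1} \times \bP^3$ and the projectivized restriction over a $\bP^3$-fibre is $\bP(T_{\bP^3})$ or $\bP(\sS_i)$ (the only CP fivefolds besides $\bP^2 \times \bP^3$ that are $\bP^2$-bundles over $\bP^3$), a seesaw argument — using that the restriction of $\sE$ to the fibres of $Y \to \bP^3$ is trivial by the auxiliary fact, together with the rigidity of $T_{\bP^3}$ and of the spinor bundles — identifies $\sE$ with a twisted pullback from the $\bP^3$-factor, so $X \simeq \pbP{2}{\rho_Y-1} \times \bP(T_{\bP^3})$ or $\pbP{2}{\rho_Y-1} \times \bP(\sS_i)$, again in the list.

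The step I expect to be the main obstacle is the case $n = 2\rho+1$. The quadric subcase ($F \simeq \bQ^3$) needs the somewhat ad hoc reduction to Proposition~\ref{pPb}; more substantially, when $\dim F = 2$ one must understand CP $\bP^2$-bundles over the already-classified but non-FT base $Y$ finely enough to exclude exotic behaviour, and carry out the seesaw/rigidity step that upgrades the fibrewise data to the asserted global product decomposition. The explicit classifications of CP fourfolds and of CP fivefolds are exactly what make these fibrewise computations tractable.
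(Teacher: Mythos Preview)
Your overall strategy---inducting on $\rho$, passing to a non-$\bP^1$ elementary contraction $\pi\colon X\to Y$, and observing (via Proposition~\ref{iniFT}) that $Y$ inherits the hypothesis of admitting no contraction onto an FT manifold---is sound, and in fact yields a cleaner proof of the inequality $n\ge 2\rho_X$ than the paper's chain-of-contractions argument. The equality case and most of part~(2) are also handled essentially as in the paper.

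The genuine gap is the subcase $\dim F=3$, $F\simeq\bQ^3$. You propose to base-change $\pi$ to a line $\ell$ in a $\bP^2$-factor of $Y$ and invoke Proposition~\ref{pPb}, but that proposition requires, on the base-changed fourfold $X_\ell$, an auxiliary contraction $g\colon X_\ell\to Z$ with $\dim Z=3$ not contracting the $\bQ^3$-fibres, and you do not say where such a $g$ comes from; it is not obtained by restricting a global contraction of $X$ without already knowing the structure of $X$. Worse, even if you established that $-K_\pi$ is nef, the implication ``$-K_\pi$ nef $\Rightarrow$ fibration trivial'' is Proposition~\ref{trivb}, which is formulated and proved only for projective bundles, not for quadric fibrations. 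The paper sidesteps the whole issue: restricting $X$ over a $\bP^2$-slice of $Y$ gives a CP $5$-fold fibred in $\bP^3$ or $\bQ^3$ over $\bP^2$, hence $\bP^2\times\bP^3$ or $\bP^2\times\bQ^3$ by Theorem~\ref{CP5}, whose second projection has $\bP^2$-fibres; thus there is always \emph{another} elementary contraction of $X$ with $2$-dimensional fibres, and one may work exclusively with $\bP^2$-bundles, to which Propositions~\ref{trivb} and~\ref{pPb} apply directly. A smaller point: in your final step for $Y\simeq\pbP{2}{\rho-2}\times\bP^3$, rigidity of $T_{\bP^3}$ or of the spinor bundles is not the right tool---triviality of $\sE$ on each $\bP^2$-slice already forces, by cohomology and base change, that $\sE$ is (after a twist) pulled back from $\bP^3$; the paper instead produces a second $\bP^2$-bundle contraction $g\colon X\to V$ and applies Proposition~\ref{pPb} to it.
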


\begin{proof}
First, we prove  by induction on $n$ that every CP $n$-fold
with $2 \rho _X > n$ admits a contraction onto an FT manifold.
The case $n=1$ is trivial.

Assume $n>1$.
By Proposition~\ref{cont}, we have a sequence of smooth elementary contractions
\begin{align}\label{sequence}
X=X_0 \to X_1 \to \cdots  \to X_{2 \rho_X -n}  \cdots \to X_{\rho_X -1} \to X_{\rho_X}=\text{point},
\end{align}
where each $X_i$ is a CP manifold of dimension $\leq n-i$ with Picard number $ \rho _X -i$.

If $\dim X_{2\rho_X -n} = 2n-2\rho_X$ for every sequence \eqref{sequence}, then $X _{2\rho_X-n-1}$ is an FT manifold.

Otherwise $\dim X_{2\rho_X -n} < 2n-2\rho_X$
for some sequence \eqref{sequence}.
Then we have 
\[
\dim X _{2\rho_X -n}< 2n - 2 \rho _X = 2 \rho _{ X _{2\rho_X -n}}.
\]
Thus by inductive hypothesis $ X _{2\rho_X -n}$ admits a contraction onto an FT manifold, and then so does $X$.

\medskip
Next, we prove \ref{2rho1} and \ref{2rho2}.

\ref{2rho1} we proceed by induction on $n$.
If $n=2$, then $X\simeq \bP^2$ and the assertion holds.
Hence we assume $n>2$.
Then, by Proposition~\ref{cont}, there exists a smooth elementary contraction $f \colon X \to Y$.

If $\dim Y < n-2$, then $2 \rho_Y > \dim Y$.
Hence $Y$ is admits a contraction onto an FT manifold. This contradicts our hypothesis.
Hence $\dim Y \geq n-2$  for every elementary contraction $f \colon X \to Y$.
Furthermore, since  $X$ is not an FT manifold,  there exists an elementary contraction $f \colon X \to Y$ with $\dim Y = n-2$.
Then, by inductive hypothesis, $Y \simeq  \pbP{2}{\rho_Y}$.
Hence $f$ is a $\bP ^2$-bundle.
Furthermore $f$ is  trivial on each factor $\bP^{2}$ of $Y$ by Theorem~\ref{CP5}.
Hence $X \simeq  \pbP{2}{\rho_X}$ by Proposition~\ref{trivb} \ref{trivb5} $\Rightarrow$ \ref{trivb1}.

\medskip
\ref{2rho2}
We proceed by induction on $n$.
If $n=3$ or $5$, the assertion follows from Theorem~\ref{CP5}.
Hence we assume that $n>5$.
By our hypothesis, there exists an elementary contraction $f \colon X \to Y$ with $n-3 \leq \dim Y \leq n-2$.

If $\dim Y = n-3$, then $Y \simeq \pbP{2}{\rho_X -1}$ by \ref{2rho1}.
Let  $g \colon X \to Z$ be the elementary contraction such that the following diagram is commutative:

\[\xymatrix{
         X          \ar[rr]^g    \ar[d]_f  &  &    Z \ar[d]                            \\
Y \simeq \bP ^2 \times \pbP{2}{\rho_X -2}     \ar[rr]^{\pr _2} \ar[d]_{\pr_{1}} &  &    \pbP{2}{\rho_X -2} \ar[d]\\
\bP ^2 \ar[rr] && \text{point}
}\]
Then, by the classification of CP $5$-fold with Picard number two, every fiber of $\pr _2 \circ f$ is isomorphic to  $\bP^{2}\times \bP^{3}$ or $\bP ^{2} \times \bQ^{3}$.
Thus $g$ is a $\bP ^2$-fibration.

Hence we may find a elementary contraction $f \colon X \to W$ with $\dim W = n-2$.
Then $W \simeq  \pbP{2}{\rho_X -2} \times \bP ^3$,
                                                     $\pbP{2}{\rho_X -2} \times \bQ ^3$,
                                                     $\pbP{2}{\rho_X -3} \times \bP (\sS _i)$,
                                                     $\pbP{2}{\rho_X -3} \times \bP (T_{\bP ^3} )$ by inductive hypothesis.
In any case $f$ is a $\bP ^{2}$-bundle.
Furthermore, if the last three cases occur, then $f$ is trivial on any fiber of the elementary contractions of $W$ by the classification of CP $m$-folds with $m=3$, $4$ or $5$.
Hence $X \simeq  \bP^{2} \times W$ by Proposition~\ref{trivb} \ref{trivb5} $\Rightarrow$ \ref{trivb1}.

Hence we may assume $W \simeq  \pbP{2}{\rho_X -2} \times \bP ^3$.
Let  $g \colon X \to V$ be the elementary contraction such that the following diagram is commutative:
\[\xymatrix{
X \ar[rr]^g \ar[d]_{f} && V \ar[d] \\
W \simeq  \bP^2 \times \left(\pbP{2}{\rho_X -3} \times \bP ^3\right) \ar[d]_{\pr _1} \ar[rr]^{\pr _2} &&  \left(\pbP{2}{\rho_X -3} \times \bP ^3 \right) \ar[d]\\
\bP ^2 \ar[rr] && \text{point}
}\]
Then by the classification of CP $4$-fold, every fiber of $\pr _2 \circ f$ is isomorphic to $\bP^2 \times \bP^2$.
Thus $g$ is a $\bP ^2$ -fibration.
By inductive hypothesis, we have $V \simeq \pbP{2}{\rho_X -2} \times \bP ^3$,
                                                     $\pbP{2}{\rho_X -2} \times \bQ ^3$,
                                                     $\pbP{2}{\rho_X -3} \times \bP (\sS _i)$,
                                                     $\pbP{2}{\rho_X -3} \times \bP (T_{\bP ^3} )$.
Hence $g$ is a $\bP^2$-bundle.
Since every $g$-fiber is not contracted by $\pr _1 \circ f$, it follows from Proposition~\ref{pPb} that $-K_g$ is nef.
Hence the assertion follows from Proposition~\ref{trivb} \ref{trivb2} $\Rightarrow$ \ref{trivb1}.
\end{proof}

\begin{theorem}\label{k=2}
Let $X$ be a CP $n$-fold with $n-\rho_{X} \leq 3$.
Then $X$ is a rational homogeneous manifold.
\end{theorem}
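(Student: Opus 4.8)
The plan is to induct on $n$, using Theorem~\ref{2rho} to split off the FT case and then analyzing the remaining low-codimension cases by looking at a suitable elementary contraction. Concretely, set $k \coloneqq n - \rho_X$, so $k \leq 3$. If $X$ admits a contraction onto an FT manifold, then we are done immediately: by Corollary~\ref{FTdom} it suffices to know Conjecture~\ref{CP} for CP manifolds $Y$ with $\dim Y < n$ and $\dim Y - \rho_Y \leq k \leq 3$, which is exactly the inductive hypothesis (the base cases $n \leq 5$ being Theorem~\ref{CP5}). So we may assume $X$ admits no such contraction; then Theorem~\ref{2rho} applies and gives $n \geq 2\rho_X$, hence $\rho_X \leq n - \rho_X = k \leq 3$. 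Thus $X$ has Picard number at most $3$, and moreover $X$ is determined up to the short list in Theorem~\ref{2rho}\ref{2rho1}--\ref{2rho2} whenever $n \in \{2\rho_X, 2\rho_X+1\}$, i.e. whenever $k \in \{\rho_X, \rho_X+1\}$.

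First I would dispose of the small Picard numbers. If $\rho_X = 1$, then $X$ is Fano of Picard number one with nef tangent bundle; since $n = \rho_X + k \leq 4$, this is covered by Theorem~\ref{CP5}. If $\rho_X = 2$, then $n \leq 5$, again covered by Theorem~\ref{CP5}. If $\rho_X = 3$, then $n \leq 6$; the cases $n \leq 5$ are Theorem~\ref{CP5}, so the only genuinely new case is $n = 6$, $\rho_X = 3$, which is $n = 2\rho_X$ and hence is handled by Theorem~\ref{2rho}\ref{2rho1}: $X \simeq \pbP{2}{3}$, a rational homogeneous manifold. Thus in every case where $X$ has no contraction onto an FT manifold we are done.

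It remains to organize the induction so that the hypotheses of Corollary~\ref{FTdom} are met. Taking $k = 3$, I would state the induction as: assume Conjecture~\ref{CP} holds for all CP manifolds of dimension $< n$ with $\dim Y - \rho_Y \leq 3$; this is vacuous for $n \leq 5$ by Theorem~\ref{CP5}, giving the base of the induction. For the inductive step with a general $n$, the dichotomy above (FT contraction, handled by Corollary~\ref{FTdom}; no FT contraction, handled by Theorem~\ref{2rho} together with the case analysis $\rho_X \leq 3$) covers all possibilities. Since rational homogeneous manifolds and their products are rational homogeneous, the conclusion $X$ is rational homogeneous follows in every branch.

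The main obstacle, and the only place where real work is hidden, is the verification that the numerology actually closes: one must check that when $X$ has no FT contraction the bound $n \geq 2\rho_X$ from Theorem~\ref{2rho} forces $\rho_X \leq 3$ \emph{and} that the residual cases $(n,\rho_X)$ not covered by Theorem~\ref{CP5} are precisely those for which Theorem~\ref{2rho}\ref{2rho1}--\ref{2rho2} give an explicit homogeneous answer; here the only such case is $(6,3)$ via part~\ref{2rho1}. Everything else is bookkeeping with Corollary~\ref{FTdom}. One should also double-check that in the FT branch the fiber $F$ of the contraction onto $M$ indeed satisfies $\dim F - \rho_F \leq 3$, but this is exactly the inequality already proved inside Corollary~\ref{FTdom}.
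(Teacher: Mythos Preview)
Your proposal is correct and follows essentially the same approach as the paper: induct on $n$, use Corollary~\ref{FTdom} for the FT-contraction case, then invoke Theorem~\ref{2rho} to bound $\rho_X$ in the remaining case and reduce to dimensions covered by Theorem~\ref{CP5}. The paper compresses your case analysis by observing that Theorem~\ref{2rho} (including parts~\ref{2rho1} and~\ref{2rho2}) lets one assume $n \geq 2\rho_X + 2$, which together with $n \leq \rho_X + 3$ forces $\rho_X \leq 1$ and $n \leq 4$; your version instead stops at $\rho_X \leq 3$ and handles the single leftover case $(n,\rho_X)=(6,3)$ directly via Theorem~\ref{2rho}\ref{2rho1}, but the content is the same.
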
 

\begin{proof}
We may assume that $ 2 \rho_{X} +2 \leq n$ by Corollary~\ref{FTdom}, Theorem~\ref{2rho} and induction on $n$.
Then, by the inequality $2 \rho_{X} +2 \leq n \leq  \rho _{X} +3$, we have $n \leq 4$ and the assertion follows from Theorems~\ref{CP5}.
\end{proof}

Finally, we prove the following:
\begin{theorem}\label{k=4}
Let $X$ be a CP $n$-fold with $n-\rho_{X} = 4$.
Then $X$ is a rational homogeneous manifold.
\end{theorem}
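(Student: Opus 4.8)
The plan is to argue by induction on $n$, reducing first to the case in which $X$ admits no contraction onto an FT manifold. By Theorem~\ref{k=2} the conjecture holds for every CP manifold $Y$ with $\dim Y - \rho_Y \leq 3$, and by the inductive hypothesis it holds for every CP manifold $Y$ with $\dim Y < n$ and $\dim Y - \rho_Y = 4$; hence the hypothesis of Corollary~\ref{FTdom} is met with $k = 4$, and we may assume $X$ has no contraction onto an FT manifold. Then Theorem~\ref{2rho} yields $n \geq 2\rho_X$, so $\rho_X \leq 4$ and $n \leq 8$. If $\rho_X = 1$ then $n = 5$ and Theorem~\ref{CP5} applies; if $\rho_X = 4$ then $n = 2\rho_X$, and if $\rho_X = 3$ then $n = 2\rho_X + 1$, so Theorem~\ref{2rho} gives the explicit list of products, all of which are rational homogeneous. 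This leaves the essential case $n = 6$, $\rho_X = 2$.

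In that case $\NE(X)$ is a two-dimensional simplicial cone by Proposition~\ref{cont}, so $X$ carries exactly two elementary contractions $f \colon X \to Y$ and $g \colon X \to Z$, both smooth, with $Y$, $Z$ and the fibres $F_f$, $F_g$ CP manifolds of Picard number one; in particular each of these is a projective space, a smooth quadric, or $K(G_2)$. A curve contained in an $f$-fibre spans the extremal ray contracted by $f$, hence is not contracted by $g$; therefore $g|_{F_f}$ is finite and $\dim F_f + \dim F_g \leq 6$. Put $a = \dim F_f \leq b = \dim F_g$. If $a = b = 1$ then both contractions are smooth $\bP^1$-fibrations and $X$ is an FT manifold, contrary to our reduction; so $a + b \leq 6$ and $(a,b) \neq (1,1)$.

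I would then distinguish cases according to $a$. If $a = 1$, then $f$ is a smooth $\bP^1$-bundle (Remark~\ref{rem_bundle}) over one of $\bP^5$, $\bQ^5$, $K(G_2)$, and Proposition~\ref{P1b} finishes the proof. If $a = 2$, then $f$ is a $\bP^2$-bundle over $\bP^4$ or $\bQ^4$, and Theorem~\ref{triv} gives $X \simeq \bP^2 \times \bP^4$ or $\bP^2 \times \bQ^4$. Otherwise $a = b = 3$: then $f$ and $g$ are smooth fibrations whose fibres are $\bP^3$ or $\bQ^3$ and whose bases lie in $\{\bP^3, \bQ^3\}$, and Proposition~\ref{pPb}, applied with $r = 4$, shows that $-K_f$ and $-K_g$ are nef. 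If a fibre of one of the two contractions, say $f$, is $\bP^3$, then $f$ is a $\bP^3$-bundle with nef relative anticanonical divisor, so $X \simeq \bP^3 \times Y$ by Proposition~\ref{trivb}. If instead every fibre is $\bQ^3$, then restricting $-K_f$ to a $g$-fibre $F_g \simeq \bQ^3$ and using the finiteness of $f|_{F_g} \colon F_g \to Y$ forces, by nefness together with the fact that the Fano index of $\bQ^3$ equals $3$, that $Y \simeq \bQ^3$ and that $f|_{F_g}$ is an isomorphism; the symmetric conclusion for $g$ then makes $(f,g) \colon X \to Y \times Z$ an isomorphism, so $X \simeq \bQ^3 \times \bQ^3$. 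In every case $X$ is rational homogeneous.

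The genuine difficulty is concentrated in Theorem~\ref{triv}, the triviality of rank-three CP bundles over $\bP^4$ and $\bQ^4$, which is why it is isolated and proved in Section~\ref{CPbundles} by means of the slope computations; granting it, the remainder is bookkeeping on smooth contractions (whose fibre dimensions $a \leq b$ with $a + b \leq 6$ leave only the cases treated above) together with the product criteria of Propositions~\ref{pPb} and \ref{trivb}, and Proposition~\ref{P1b} for the $\bP^1$-bundle case.
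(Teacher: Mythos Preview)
Your proof is correct and follows essentially the same architecture as the paper's: reduction via Corollary~\ref{FTdom} and Theorem~\ref{2rho} to the case $n=6$, $\rho_X=2$, then case-by-case according to fibre dimension, invoking Proposition~\ref{P1b} and Theorem~\ref{triv} for the unequal-dimensional cases. The only substantive divergence is in the equidimensional case $a=b=3$.

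There, the paper gives a uniform argument: from the nefness (and non-ampleness) of both $-K_f$ and $-K_g$ one deduces $-K_X = -f^*K_Y - g^*K_Z$, so $(f,g)\colon X \to Y\times Z$ is a finite morphism with trivial relative canonical divisor; by purity of the branch locus it is \'etale, and simple connectedness of $Y\times Z$ forces it to be an isomorphism. This handles all four combinations $Y,Z\in\{\bP^3,\bQ^3\}$ at once, with no need to distinguish $\bP^3$-fibres from $\bQ^3$-fibres. Your approach instead splits: for $\bP^3$-fibres you pass through the bundle criterion of Proposition~\ref{trivb}, and for $\bQ^3$-fibres you compare Fano indices to show $(f,g)$ has degree one. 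Both arguments are valid; your $\bQ^3$ step is a bit compressed (the point is that $-K_f|_{F_g}\equiv 0$ gives $3H_{F_g}=(f|_{F_g})^*(-K_Y)$, which rules out $Y\simeq\bP^3$ by divisibility and forces $f|_{F_g}$ to have degree one when $Y\simeq\bQ^3$), but it goes through. The paper's route is shorter and more conceptual; yours avoids the appeal to purity of branch locus and simple connectedness.
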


\begin{proof}
By Corollary~\ref{FTdom}, Theorems~\ref{2rho} and \ref{k=2} and induction on $n$,
we may assume that $ 2 \rho_{X} +2 \leq n$, and hence $n \leq 6$.
The case $n=5$ follows from Theorem~\ref{CP5}.

Assume that $n=6$.
Then $\rho_{X}=2$, and there are two different smooth elementary contractions $f \colon X \to Y$ and $g \colon X \to Z$.
 Without loss of generality, we may assume that $ \dim Y \geq \dim Z $.
  Furthermore, $\dim Y \geq 3$ by the inequality $ \dim X \leq \dim Y + \dim Z$.

If $ \dim Y = 5$, then $Y$ is isomorphic to $\bP ^{5}$, $\bQ ^{5}$ or $K(G_{2})$ by Theorem~\ref{CP5}.
Since $Y$ is rational, $f$ is a $\bP^1$-bundle.
Hence $X$ is a rational homogeneous manifold by Proposition~\ref{P1b}.

If $ \dim Y = 4$, then $Y$ is isomorphic to $\bP ^4$ or $ \bQ ^4 $ by Theorem~\ref{CP5}.
Since $Y$ is rational, $f$ is a $\bP^2$-bundle.
Hence $X$ is isomorphic to $\bP ^2\times\bP ^4$ or $\bP ^2\times\bQ ^4$ by Theorem~\ref{triv}.

In the remaining case,
we have $ \dim Y = \dim Z = 3$.
Hence $Y \simeq \bP ^{3}$ or $\bQ ^{3}$, and $Z \simeq \bP ^{3}$ or $\bQ ^{3}$ by Theorem~\ref{CP5}.
Then, by Proposition~\ref{pPb}, $-K_{f}$ and $-K_{g}$ are nef.
Hence we have $-K_X = - {f}^* K_{Y} - {g}^* K_{Z} $.
Then, by purity of branch locus, $(f,g): X \to Y \times Z$ is \'etale.  
Since $Y \times Z$ is simply connected, we have $X \simeq Y \times Z$.
This completes the proof.
\end{proof}

\bibliographystyle{amsplain}
\bibliography{}

\end{document}